\documentclass[11pt, reqno]{amsart}

\usepackage{amsmath, amssymb, amscd, amsthm, amsfonts}
\usepackage{graphicx}
\usepackage{hyperref}
\usepackage{xcolor}

\makeatletter
\newcommand*\bigcdot{\mathpalette\bigcdot@{.5}}
\newcommand*\bigcdot@[2]{\mathbin{\vcenter{\hbox{\scalebox{#2}{$\m@th#1\bullet$}}}}}
\makeatother

\newtheorem{theorem}{Theorem}[section]
\newtheorem{lemma}[theorem]{Lemma}

\newtheorem{corollary}[theorem]{Corollary}
\newtheorem{proposition}[theorem]{Proposition}
\theoremstyle{definition}

\newtheorem{remark}{Remark}
\newtheorem*{acknow}{Acknowledgments}
\raggedbottom

\newcommand{\divg}{\mathrm{div}}

\allowdisplaybreaks[1]

\begin{document}

\title[A partially overdetermined problem in convex cones]{A partially overdetermined problem for $p$-Laplace equation in convex cones}

\author{Hui Ma}
\address{Department of Mathematical Sciences, Tsinghua University,
Beijing 100084, P.R. China}
\email{ma-h@mail.tsinghua.edu.cn}

\author{Mingxuan Yang}
\address{Department of Mathematical Sciences, Tsinghua University,
Beijing 100084, P.R. China} 
\email{ymx20@mails.tsinghua.edu.cn}

\author{Jiabin Yin}
\address{School of Mathematics and Statistics, Guangxi Normal University,
Guilin 541004, P.R. China}
\email{jiabinyin@126.com}

\keywords{Overdetermined problem, $p$-Laplace, convex cone, isoperimetric type inequality, Heintze-Karcher type inequality.}

\subjclass[2020]{35N25, 35A23, 31B15, 53C24}

\begin{abstract}
  We consider a partially overdetermined problem for the $p$-Laplace equation in a convex cone $\mathcal{C}$ intersected with the exterior of a smooth bounded domain $\overline{\Omega}$ in $\mathbb{R}^n$($n\geq2$). First, we establish the existence, regularity, and asymptotic behavior of a capacitary potential. Then, based on these properties of the potential, we use a $P$-function, the isoperimetric inequality, and the Heintze-Karcher type inequality in a convex cone to obtain a rigidity result under the assumption of orthogonal intersection.  
\end{abstract}


\maketitle

\section{Introduction}

 The investigation of symmetry in overdetermined boundary value problems has emerged as a significant area of study within PDE theory. One of the seminal symmetry results in this field was obtained by Serrin \cite{Se}, which is now a classic and influential result. The main techniques used to address such problems include the method of moving planes, as well as  Weinberger's approach \cite{Wei}, which is based on the maximum principle for the so-called $P$-function and Rellich-Pohozaev's integral identity.

For our purpose, we recall an overdetermined problem for a capacity in an exterior domain. The capacity of a smooth bounded domain $\Omega$ in $\mathbb{R}^{n}$ is defined as
$${\rm Cap}(\Omega)=\inf_{v}\left\{\frac{1}{2}
\int_{\mathbb{R}^n}|\nabla v|^2 ~d x : v\in C_{c}^{\infty}(\mathbb{R}^n)~, v\geq 1 \text{ in } \Omega\right\}.
$$
The minimizer for ${\rm Cap}(\Omega)$ is characterized by the capacitary potential $u$ satisfying
\begin{equation}\label{eqn:1.1'}
\left\{
\begin{aligned}
&\Delta u=0 \text{ in } \mathbb{R}^n\setminus\bar{\Omega},\\
&u=1  \text{ on }  \partial\Omega,\\
&u\rightarrow 0  \text{ as } |x|\rightarrow +\infty.
\end{aligned}\right.
\end{equation}

In \cite{R2}, Reichel studied an overdetermined problem for a nonhomogeneous Laplace equation $\Delta u+f(u,|\nabla u|)=0$. In particular, when the nonhomogeneous term $f=0$, assuming an overdetermined boundary condition
\begin{equation}\label{eqn:1.2'}|\nabla u|=C \text{ on } \partial\Omega,
\end{equation}
the result in \cite{R2} can imply that \eqref{eqn:1.1'} and \eqref{eqn:1.2'} admit a solution if and only if $\Omega$ is a ball.

There have been numerous investigations into various types of overdetermined boundary value problems. One such problem, concerning the $p$-Laplace equation, was studied by Reichel in \cite{R1}. He utilized the method of moving planes and obtained a result for a more general class of quasilinear equations, including $p$-Laplace equations in exterior domains.

Following the original scheme of Weinberger's argument, Garofalo and Sartori \cite{GS} and Poggesi \cite{P} proved Reichel's result for the $p$-Laplace equation by using the $P$-function, which was first used by Payne and Philippin \cite{PP} for the exterior problem.
The corresponding overdetermined problem for the anisotropic $p$-capacity in an exterior domain has been extended by Bianchini, Ciraolo, and Salani \cite{BCS} and Bianchini and Ciraolo \cite{BC}. They proved the symmetric results with the assumption that the domain is convex, using an integral method. 
Later, Xia and Yin \cite{XY} obtained the same results without the convexity condition, under assumptions on the regularity of the anisotropic norm. 
For the problems involving the $p$-Laplacian and anisotropic $p$-Laplacian in $\mathbb{R}^{n}$, interested readers may refer to \cite{CS, FK, FGK, GL, Wang-Xia}.

Regarding the cone case, Pacella and Tralli \cite{PT} first characterized the spherical sectors for the Laplace equation in an interior domain within a convex cone in Euclidean space. Subsequently, Ciraolo and Roncoroni \cite{CR} generalized that to more general elliptic equations in space forms, which include the $p$-Laplace equation on a sector-like domain in a convex cone in Euclidean space. More recently, Ciraolo and Li \cite{CL} obtained a rigidity result for the anisotropic $n$-Laplace equation in an exterior domain. Notably, their results were obtained under weak regularity assumptions about the solution, domain, and cone. Other overdetermined problems in cones can be referred to \cite{CFR, DPV, IPT, PT2}.

Let us recall the result for Euclidean norm in the paper \cite{CL}: under a prescribed logarithmic condition at infinity and an overdetermined condition, they proved a rigidity result by showing that the
existence of a solution for $n$-Laplace equation implies that the set $\mathcal{C}\cap\Omega$ must be the intersection of $\mathcal{C}$ and a ball. A natural question is then the following:

 \textit{Is there a similar rigidity result in an exterior domain with respect to the convex cone $\mathcal{C}$ when} $1<p<n$?

 \quad

We focus on the partially overdetermined problem for $p$-capacity ($1< p< n$) in convex cone $\mathcal{C}$.

Let $\mathcal{C}$ be an open cone in $\mathbb{R}^{n}$, where $n \geq 2$, with vertex at the origin $O$. Specifically, let $\omega$ be an open connected domain on the unit sphere $S^{n-1}$, then 
$$\mathcal{C} := \{tx : x \in \omega, t \in (0, +\infty)\}.$$

Given an open convex cone $\mathcal{C}$ in $\mathbb{R}^{n}$ $(n\geq 2)$ and $\Omega\subset \mathbb{R}^{n}$ is a smooth bounded domain containing the vertex of the convex cone $\mathcal{C}$. We denote 
$$\Sigma:=\partial\Omega\cap \mathcal{C},\quad\Omega^{\mathcal{C}}:=\mathcal{C}\backslash\overline{\Omega} \text{ and } \Gamma:=\partial{\Omega}^{\mathcal{C}}\backslash \overline{\Sigma}.$$
Moreover, we assume throughout this paper that ${\Omega}^{\mathcal{C}}$ is connected, $\partial \overline{\mathcal{C}}\backslash{O}$ is smooth, the $(n-1)$-dimensional Hausdorff measure $\mathcal{H}^{n-1}\left(\overline{\Sigma}\right)>0$.

Motivated by \cite{CL, HKM, H}, we introduce the $p$-capacity of the sector-like domain $\Omega\cap{\mathcal{C}}$ in the convex cone $\mathcal{C}$ for $1<p< n$ that is defined by
\begin{equation}\label{eqn:capacity}
\begin{split}
&{\rm Cap}_{p}(\Omega\cap{\mathcal{C}};\mathcal{C}):=\inf_{\phi}\left\{\frac{1}{p}\int_{{\mathcal{C}}}|\nabla\phi|^{p}~ dx: \phi\in \mathcal{A}\right\},\\
    \text{ where }& \mathcal{A}:=\left\{\phi\in W^{1,p}(\mathcal{C}):  \phi-f=\omega \chi_{{\mathcal{C}}} \text{ for }\omega\in W^{1,p}_{0}(\mathbb{R}^{n}\backslash \overline{\Omega}) 
   \right\}. 
\end{split}
\end{equation}
Here the function $f\in C^{\infty}_{c}(\mathbb{R}^{n})$, $0\leq f\leq 1$ and $f=1$ in a neighborhood of $\Omega.$

From Lemma \ref{existence} below, we know that there exists a unique weak solution $u$ to the problem 
\begin{equation}\label{eqn:1.1}
\left\{
\begin{aligned}
&\Delta_{p}u=0 \text{ in }  {\Omega}^{\mathcal{C}},\\
&u=1 \text{ on }  {\Sigma},\\
&\langle\nabla u(x), \nu_{\Gamma}\rangle=0  \text{ on } \Gamma,\\
&u(x)\rightarrow 0 \text{ as } |x|\rightarrow+\infty.
\end{aligned}\right.
\end{equation}
where $\nu_{\Gamma}$ is a unit exterior normal to $\Gamma$ and $\Delta_{p}$ denotes the $p$-Laplace operator defined by $\Delta_{p} u=\divg (|\nabla u|^{p-2}\nabla u).$

Moreover, the solution $u$ satisfies
 $${\rm Cap}_{p}(\Omega\cap{\mathcal{C}};\mathcal{C})=\frac{1}{p}\int_{\Omega^{\mathcal{C}}} |\nabla u|^{p} ~dx.$$
The function $u$ is called the $p$-capacitary potential in convex cone $\mathcal{C}$ associated with $\Omega\cap\mathcal{C}$. 

In addition, we denote $$a(\xi)=\frac{1}{p}(\nabla|\xi|^{p})(\xi), \quad
a_{ij,p}(\xi)=|\xi|^{p-2}\delta_{ij}+(p-2)|\xi|^{p-4}\xi_{i}\xi_{j},$$
then \begin{equation}\label{p-harmonic}
\Delta_{p}u=\divg (a(\nabla u))=a_{ij,p}u_{ij}=[|\nabla u|^{p-2}\delta_{ij}+(p-2)|\nabla u|^{p-4}u_{i}u_{j}]u_{ij}.
\end{equation}
And we say a function $u\in W_{l o c}^{1, p}\left(\overline{{\mathcal{C}}}\backslash \Omega\right)$ is a weak solution of (\ref{eqn:1.1}) in $\Omega^{\mathcal{C}}$ if $u=1$ on ${\Sigma}$ and $u(x)\rightarrow 0 \text{ as } |x|\rightarrow+\infty$ such that
$$
\int_{\Omega^{\mathcal{C}}}\langle a(\nabla u), \nabla \varphi\rangle ~d x=0
$$
for all $\varphi \in W^{1, p}\left(\Omega^{\mathcal{C}}\right)$ with $\varphi=0$ on $\Sigma$ and with bounded support.
 
 Note that we allow the function $\varphi$ to be nonzero on the boundary $\Gamma$.

We will study the problem \eqref{eqn:1.1} with the overdetermined boundary condition
\begin{equation}\label{Overdeterminedcondition}
    |\nabla u|=C \text{ on }\Sigma.
\end{equation}
for some positive constant $C$. 

\quad

Note that in the \cite[Proposition 1.1]{CL}, $\mathcal{C}$ was written as $\mathcal{C} = \tilde{\mathcal{C}}\times\mathbb{R}^{k}$, where $\tilde{\mathcal{C}}\subset\mathbb{R}^{n-k}$ is an open convex cone with vertex at the origin which contains no lines and $k \in\{0,\ldots, n\}$. But now due to the assumption that $\partial \overline{\mathcal{C}}\backslash \{O\}$ is smooth, the convex cone $\mathcal{C}$ can not be written as in that form. 
It is easy to calculate that such a solution exists in the following special case:
\begin{proposition}[Model]
Let $B_{R}(O)$ be a ball with radius $R=\dfrac{n-p}{p-1}C^{-1}$ centered at the origin. Then the function \begin{equation}\label{model}
    u(x)=\left(\frac{|x|}{R}\right)^{\frac{p-n}{p-1}}
\end{equation} is the unique solution to the problem (\ref{eqn:1.1}) and (\ref{Overdeterminedcondition}).
\end{proposition}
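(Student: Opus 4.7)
The plan is direct verification: the function in \eqref{model} is the classical radial $p$-harmonic profile, and every requirement in \eqref{eqn:1.1}--\eqref{Overdeterminedcondition} can be read off by short computations once $\Omega = B_R(O)$ is fixed; uniqueness is then inherited from Lemma \ref{existence}.

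I would first set $\alpha := (p-n)/(p-1) < 0$, write $u(x) = R^{-\alpha}|x|^{\alpha}$, and compute $\nabla u(x) = \alpha R^{-\alpha}|x|^{\alpha-2} x$, so that $|\nabla u(x)| = |\alpha|\, R^{-\alpha}|x|^{\alpha-1}$. The $p$-harmonicity of $u$ away from $O$ is classical: in radial coordinates $\Delta_p u = r^{1-n}(r^{n-1}|u'|^{p-2}u')'$, and the exponent $\alpha=(p-n)/(p-1)$ is precisely the one that makes the quantity inside the outer derivative constant, yielding $\Delta_p u = 0$ on $\Omega^{\mathcal{C}}$. The Dirichlet condition $u = 1$ on $\Sigma$ and the decay at infinity follow from $|x| = R$ on $\Sigma$ and from $\alpha < 0$, respectively. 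For the Neumann condition on $\Gamma$: since $\mathcal{C}$ is a cone with vertex $O$, the position vector $x$ is tangent to $\partial\overline{\mathcal{C}}\setminus\{O\}$ at every such $x$, so $\nu_\Gamma(x) \perp x$; because $\nabla u(x)$ is parallel to $x$, we conclude $\langle\nabla u, \nu_\Gamma\rangle = 0$ on $\Gamma$. Finally, on $\Sigma$ we have $|\nabla u| = |\alpha|/R = \tfrac{n-p}{p-1} R^{-1}$, which equals $C$ precisely by the choice $R = \tfrac{n-p}{p-1} C^{-1}$.

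To promote \emph{a} solution to \emph{the} solution, I would verify admissibility so that the uniqueness half of Lemma \ref{existence} applies. Here $|\nabla u|^p \sim |x|^{p(\alpha-1)}$ with $p(\alpha-1) = p(1-n)/(p-1) < -n$ exactly when $p<n$, so $|\nabla u| \in L^p(\Omega^{\mathcal{C}})$; cutting $u$ against the reference function $f$ appearing in the definition of $\mathcal{A}$ then places $u$ in the admissible class for ${\rm Cap}_p(\Omega\cap \mathcal{C};\mathcal{C})$. Lemma \ref{existence} with $\Omega = B_R(O)$ therefore gives uniqueness, and \eqref{Overdeterminedcondition} holds automatically with the prescribed $C$. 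I do not foresee any substantive obstacle --- the argument is pure verification --- and the only genuinely delicate point is this integrability check at infinity, which relies on $p<n$ in an essential way.
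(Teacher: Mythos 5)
Your verification is correct and is exactly the computation the paper has in mind (the paper states the proposition as an easy calculation and omits the details): radial $p$-harmonicity of $|x|^{(p-n)/(p-1)}$, the boundary and decay conditions, orthogonality of $\nabla u\parallel x$ to $\nu_\Gamma$ on the conical boundary, the normalization $|\nabla u|=\tfrac{n-p}{p-1}R^{-1}=C$ on $|x|=R$, and uniqueness from Lemma \ref{existence}. No gaps.
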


\quad

Our main result concerns the partially overdetermined problem in convex cone $\mathcal{C}$ with an assumption of orthogonal intersection.
\begin{theorem}\label{thm:1.2}
Let $n\geq 2$, $1<p<n$, $\Omega\subset\mathbb R^n$ be a bounded domain containing the vertex of the convex cone $\mathcal{C}$ with a boundary of class $ C^{2,\alpha}$ for $\alpha\in(0,1)$ and $\overline{\Sigma}=\partial \Omega\cap \overline{\mathcal{C}}$ be an embedded hypersurface with boundary. Moreover, we assume that $\overline{\Sigma}$ is perpendicular to $\partial \overline{\mathcal{C}}$ along $\partial{\overline{\Sigma}}.$  Then \eqref{eqn:1.1} and \eqref{Overdeterminedcondition} admit a weak solution $u$ if and only if $u$ is given by (\ref{model}) for the radius $R=\dfrac{n-p}{p-1}C^{-1}$ and $\overline{\Sigma} = \partial B_R(O) \cap \overline{\mathcal{C}}$.
\end{theorem}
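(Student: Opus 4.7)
The plan is to adapt the $P$-function method of Payne--Philippin \cite{PP} and Garofalo--Sartori \cite{GS}/Poggesi \cite{P} for the exterior $p$-capacity problem to the convex cone setting, combining it with the isoperimetric inequality in $\mathcal{C}$ (of Lions--Pacella type) and a Heintze--Karcher inequality in the cone for hypersurfaces orthogonal to $\partial \overline{\mathcal{C}}$. The model solution satisfies
\[
|\nabla u|^{p} = \left(\frac{n-p}{p-1}\right)^{p} R^{-p}\, u^{\frac{p(n-1)}{n-p}},
\]
so I would introduce the auxiliary function
\[
P := |\nabla u|^{p} - \left(\frac{n-p}{p-1}\right)^{p} R^{-p}\, u^{\frac{p(n-1)}{n-p}},
\]
normalized so that $P \equiv 0$ on the model; with $R = \frac{n-p}{(p-1)C}$ as in the statement, the overdetermined condition $|\nabla u| = C$ and $u = 1$ on $\Sigma$ give $P = 0$ on $\Sigma$.

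Next, I would prove that $P \le 0$ throughout $\Omega^{\mathcal{C}}$. On the set $\{\nabla u \neq 0\}$, a direct computation using $\Delta_{p} u = 0$ shows that $P$ satisfies an elliptic differential inequality $\mathcal{L} P \ge 0$, where $\mathcal{L}$ is a suitable (possibly degenerate) linearization of $\Delta_{p}$ built from $a_{ij,p}(\nabla u)$. The asymptotic expansions of $u$ and $|\nabla u|$ established earlier in the paper force $P(x) \to 0$ as $|x|\to\infty$. On $\Gamma \subset \partial \mathcal{C}$, the Neumann condition $\langle \nabla u, \nu_{\Gamma}\rangle = 0$ together with the convexity of $\mathcal{C}$ (non-negativity of $h^{\partial \mathcal{C}}$ with respect to the outward normal $\nu_{\Gamma}$) produces
\[
\tfrac{1}{p}\,\partial_{\nu_{\Gamma}}|\nabla u|^{p} = -|\nabla u|^{p-2}\, h^{\partial \mathcal{C}}(\nabla u, \nabla u) \le 0,
\]
while $\partial_{\nu_{\Gamma}}(u^{p(n-1)/(n-p)}) = 0$, so $\partial_{\nu_{\Gamma}} P \le 0$ on $\Gamma$. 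A Hopf-type boundary-point argument --- with the critical set of $u$ handled by an approximation scheme as in \cite{P} --- then yields $P \le 0$ on $\Omega^{\mathcal{C}}$.

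To upgrade $P \le 0$ to $P \equiv 0$, I would use a Pohozaev-type identity obtained by integrating the divergence of $\langle x, \nabla u\rangle\, a(\nabla u) - \frac{1}{p}\,|\nabla u|^{p}\, x$ over $\Omega^{\mathcal{C}} \cap B_{\rho}$ and letting $\rho \to \infty$. The orthogonality $\overline{\Sigma} \perp \partial \overline{\mathcal{C}}$ combined with the Neumann condition annihilates the $\Gamma$-boundary term, leaving only contributions from $\Sigma$ and from the sphere at infinity (which disappear in the limit thanks to the decay of $u$); the resulting identity relates $\mathrm{Cap}_{p}(\Omega \cap \mathcal{C}; \mathcal{C})$, $C$, $\mathcal{H}^{n-1}(\Sigma)$, and $\int_{\Sigma} \langle x, \nu\rangle \, d\mathcal{H}^{n-1}$. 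Chaining this identity with the isoperimetric inequality in $\mathcal{C}$ and the Heintze--Karcher inequality in $\mathcal{C}$ for hypersurfaces orthogonal to $\partial \mathcal{C}$ saturates the bound $P \le 0$; the rigidity case in Heintze--Karcher then forces $\overline{\Sigma}$ to be totally umbilical with all principal curvatures equal to $1/R$, i.e.\ $\overline{\Sigma} = \partial B_{R}(O) \cap \overline{\mathcal{C}}$. Once $\overline{\Sigma}$ is identified, the uniqueness part of Lemma \ref{existence} forces $u$ to coincide with \eqref{model}.

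The main obstacle is the joint treatment of the corner $\partial \overline{\Sigma} = \overline{\Sigma} \cap \partial \overline{\mathcal{C}}$, where the orthogonality assumption is asked to do three jobs at once: (i) kill the $\Gamma$-boundary term in the Pohozaev identity, (ii) provide the correct sign in the Hopf lemma for $P$ along $\Gamma$, and (iii) remain consistent with the $C^{1,\al}$ regularity of $u$ up to the corner. Equally delicate is the possible critical set $\{\nabla u = 0\}$ in $\Omega^{\mathcal{C}}$, on which $\Delta_{p}$ degenerates; this should be handled by a regularization argument that applies the strong maximum principle on $\{|\nabla u| > \varepsilon\}$ before passing to the limit $\varepsilon \to 0$.
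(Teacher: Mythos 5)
Your overall strategy ($P$-function plus Pohozaev identity plus isoperimetric and Heintze--Karcher inequalities in the cone) is the same as the paper's, but there is a genuine gap at the very first step: the choice of $P$-function. You take the \emph{difference} form $P=|\nabla u|^{p}-\bigl(\tfrac{n-p}{p-1}\bigr)^{p}R^{-p}u^{\frac{p(n-1)}{n-p}}$ and assert that a ``direct computation'' gives $\mathcal{L}P\ge 0$. This is not justified and is almost certainly false: already for $p=2$ one has $\Delta\left(|\nabla u|^{2}\right)=2|\nabla^{2}u|^{2}\ge 0$ but also $\Delta\left(u^{q}\right)=q(q-1)u^{q-2}|\nabla u|^{2}\ge 0$ for $q=\tfrac{2(n-1)}{n-2}>1$, so the difference has no sign. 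The subsolution property used in this circle of ideas (Payne--Philippin, Garofalo--Sartori, and \cite[Proposition 3.8]{XY}, which the paper invokes) holds for the \emph{quotient} form $P=u^{-\frac{p(n-1)}{n-p}}|\nabla u|^{p}$, whose cancellation is much more delicate. A symptom that something is off in your version: with the difference form, $P\to 0$ at infinity automatically and $P=0$ on $\Sigma$ by the overdetermined condition, so your maximum principle would deliver $P\le 0$ with no input from the geometry of $\Omega$; in the paper the isoperimetric inequality in $\mathcal{C}$ is exactly what is needed to show that the quotient $P$-function's value at infinity, namely $\bigl(\tfrac{n-p}{p-1}\bigr)^{p}\gamma^{-\frac{p(p-1)}{n-p}}$ computed from Corollary \ref{gamma} and Proposition \ref{4.4}, does not exceed its value $C^{p}$ on $\Sigma$, so that the maximum is attained on $\overline{\Sigma}$.

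The second gap is in the endgame, which you leave as ``chaining'' the identities to ``saturate $P\le 0$.'' The actual bridge to Heintze--Karcher is the normal derivative of the (quotient) $P$-function on $\Sigma$: from the maximum being attained on $\overline{\Sigma}$ one gets $\langle\nabla P,\nu_{\Sigma}\rangle\le 0$, which via the identity $|\nabla u|^{3}H=(p-1)u_{ij}u_{i}u_{j}$ and the explicit value $C=\tfrac{n-p}{n(p-1)}\tfrac{P(\Omega;\mathcal{C})}{\mathcal{H}^{n}(\Omega\cap\mathcal{C})}$ (from the capacity and Pohozaev identities) yields the pointwise bound $H_{\overline{\Sigma}}\ge (n-1)\tfrac{P(\Omega;\mathcal{C})}{n\mathcal{H}^{n}(\Omega\cap\mathcal{C})}>0$, hence $\int_{\overline{\Sigma}}(n-1)/H_{\overline{\Sigma}}\,d\mathcal{H}^{n-1}\le n\mathcal{H}^{n}(\Omega\cap\mathcal{C})$; only then does Lemma \ref{HK} apply and force equality and rigidity. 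Without deriving this mean curvature bound, the Heintze--Karcher inequality has nothing to bite on. Your treatment of $\Gamma$ (sign of $\partial_{\nu_{\Gamma}}P$ via convexity of the cone) and of the Pohozaev identity is consistent with the paper.
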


\quad

\begin{remark}
(i) In the special case $\mathcal{C}=\mathbb{R}^{n}$, the boundary $\Gamma=\emptyset$ and the condition $\langle\nabla u, \nu_{\Gamma}\rangle=0$ on $\Gamma$ is directly satisfied. Meanwhile, the assumption of orthogonal intersection is also not necessary, as we can directly apply the classic regularity results to obtain the solution $u\in C^{1}(\mathbb{R}^{n}\backslash {\Omega})$. Then it reduces to \cite[Therorem 1]{R1} (for $p$-Laplace equation) and \cite[Theorem 1]{XY}(for the anisotropic norm is equal to Euclidean norm, i.e., $F(\xi)=|\xi|$ for $\xi\in \mathbb{R}^{n}$).

(ii) By the smoothness of the boundary and the classic regularity results for the $p$-Laplace equation, we know that the weak solution $u\in C^{1}(\Omega^{\mathcal{C}}\cup\Sigma\cup\Gamma)$. However, the global regularity of $u$ is complicated. So we assume that $\Sigma$ is perpendicular to $\partial \overline{\mathcal{C}}$ along $\partial{\overline{\Sigma}}$. This assumption implies further regularity of the solution $u$, as shown in Theorem \ref{regularity}. 
\end{remark}
\quad

Without the assumption of orthogonal intersection, we can still prove the rigidity result under a regularity assumption.

\begin{corollary}\label{coro:1.3}
 Let $n\geq 2$, $1<p<n$, $\Omega\subset\mathbb R^n$ be a bounded domain containing the vertex of the convex cone $\mathcal{C}$ with a boundary of class $ C^{2,\alpha}$ for $\alpha\in(0,1)$ and $\overline{\Sigma}=\partial \Omega\cap \overline{\mathcal{C}}$ be an embedded hypersurface with boundary.  Then \eqref{eqn:1.1} and \eqref{Overdeterminedcondition} admit a weak solution $u\in C^{1}(\overline{\mathcal{C}}\backslash \Omega)$ if and only if $u$ is given by (\ref{model}) for the radius $R=\dfrac{n-p}{p-1}C^{-1}$ and $\overline{\Sigma} = \partial B_R(O) \cap \overline{\mathcal{C}}$.
\end{corollary}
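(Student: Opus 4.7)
The plan is to reduce Corollary \ref{coro:1.3} to Theorem \ref{thm:1.2} by showing that the extra $C^{1}$ regularity hypothesis automatically forces orthogonal intersection of $\overline{\Sigma}$ with $\partial \overline{\mathcal{C}}$ along $\partial \overline{\Sigma}$. All of the real analytic content (the $P$-function, the isoperimetric inequality, and the Heintze-Karcher type inequality in the convex cone) is already packaged into Theorem \ref{thm:1.2}; what is left for the corollary is only a short boundary-condition argument at the corner.

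\emph{Reduction step.} Fix any $x_{0} \in \partial \overline{\Sigma}$, i.e.\ a point where $\overline{\Sigma}$ meets $\partial \overline{\mathcal{C}}$. Since $u \in C^{1}(\overline{\mathcal{C}} \setminus \Omega)$, the gradient $\nabla u$ extends continuously up to $x_{0}$, and since $\partial \overline{\mathcal{C}} \setminus \{O\}$ is smooth the unit normal $\nu_{\Gamma}$ is defined at $x_{0}$. The overdetermined condition $|\nabla u| \equiv C > 0$ on $\Sigma$ together with the Dirichlet condition $u \equiv 1$ on $\Sigma$ forces $\nabla u(x_{0})$ to be a nonzero vector parallel to $\nu_{\Sigma}(x_{0})$. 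The Neumann condition $\langle \nabla u, \nu_{\Gamma}\rangle = 0$ on $\Gamma$ passed to the limit at the corner gives $\nabla u(x_{0}) \cdot \nu_{\Gamma}(x_{0}) = 0$. Combining these two facts yields $\nu_{\Sigma}(x_{0}) \cdot \nu_{\Gamma}(x_{0}) = 0$, which is exactly the orthogonal intersection hypothesis of Theorem \ref{thm:1.2}.

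With orthogonal intersection along $\partial \overline{\Sigma}$ now verified, all the hypotheses of Theorem \ref{thm:1.2} are in force and the ``only if'' direction of Corollary \ref{coro:1.3} follows immediately. The converse ``if'' direction is the content of the Model proposition stated above: for the explicit radial function in \eqref{model} with $\overline{\Sigma} = \partial B_{R}(O)\cap \overline{\mathcal{C}}$, the gradient $\nabla u$ is radial, hence tangent to $\partial \overline{\mathcal{C}}$ so the Neumann condition is automatic, and all other conditions (including $|\nabla u| = C$ on $\Sigma$ and $u \in C^{\infty}$ on $\overline{\mathcal{C}} \setminus \Omega$) are verified by direct computation.

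\emph{Main obstacle.} There is essentially no obstacle specific to the corollary, because all of the analytical work has already been done in Theorem \ref{thm:1.2}. The only point that deserves care is the implication ``$\nabla u(x_{0}) \parallel \nu_{\Sigma}(x_{0})$ and $\nabla u(x_{0}) \perp \nu_{\Gamma}(x_{0})$ imply $\nu_{\Sigma}(x_{0}) \perp \nu_{\Gamma}(x_{0})$'', which is elementary linear algebra but depends crucially on $\nabla u(x_{0}) \neq 0$; this nonvanishing at the corner is exactly what the overdetermined condition together with the assumed $C^{1}$ regularity deliver.
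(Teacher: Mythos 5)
Your proposal is correct and follows essentially the same route as the paper: the authors likewise deduce orthogonal intersection from the fact that $u\in C^{1}(\overline{\mathcal{C}}\setminus\Omega)$ and $|\nabla u|=C>0$ force $\nu_{\partial\Omega}|_{\overline{\Sigma}}=-\nabla u/|\nabla u|$ on $\overline{\Sigma}$, while the Neumann condition on $\Gamma$ passes to the limit on $\partial\overline{\Sigma}$, and then invoke Theorem \ref{thm:1.2}. Your explicit remark that the argument hinges on $\nabla u\neq 0$ at the corner is a correct and worthwhile clarification of the same reduction.
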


For the proof of this Corollary, it is only necessary to note that we can obtain the orthogonal intersection in Theorem \ref{thm:1.2} by using the regularity assumption of solution $u$. 

In fact, due to $u\in  C^{1}(\overline{\mathcal{C}}\backslash \Omega)$ and the overdetermined condition (\ref{Overdeterminedcondition}), we know that $u=1$ and $|\nabla u|=C$ on $\overline{\Sigma}$. Then by the condition $\langle \nabla u,\nu_{\Gamma}\rangle=0$ on $\Gamma$, we have the restriction of the unit outward normal vector $\nu_{\partial \Omega}|_{\overline{\Sigma}}=-\dfrac{\nabla u}{|\nabla u|}$ satisfying $$\langle \nu_{\partial \Omega}|_{\overline{\Sigma}}, \nu_{\overline{\Gamma}} \rangle=0 \text{ on } \partial \overline{\Sigma}.$$ 

This means that $\overline{\Sigma}$ is perpendicular to $\partial \overline{\mathcal{C}}$ along $\partial{\overline{\Sigma}}.$ 

Furthermore, if we assume that Corollary \ref{coro:1.3} holds, we can also obtain Theorem \ref{thm:1.2} from Theorem \ref{regularity} below.

\quad

Here it is worth pointing out that the generalization presented in this paper is nontrivial. 
While this paper can be viewed as a parallel version of the paper \cite{CL} where $p=n$, our approach to establishing the uniqueness, regularity, and asymptotic behavior of the solution differs from theirs. Remarkably, the methods employed to prove rigidity results also exhibit differences. Specifically, due to the $n$-Laplace equation, without using the $P$-function and discussing the curvature in their paper, they can apply the isoperimetric inequality in the convex cone with the asymptotic behavior to directly get the rigidity result. However, in our paper, we only use the inequality case of the isoperimetric inequality and not the rigidity results that hold in the case of equality. The role of the isoperimetric inequality in this paper is to combine with the $P$-function to help us prove that the mean curvature of $\overline{\Sigma}$ has a positive lower bound that is related to some geometric quantities. As for the proof of the rigidity results, the main tools we use are the  Heintze-Karcher type inequality.
Furthermore,  since we focus on the convex cone case, previous conclusions drawn for Euclidean space are no longer applicable. Consequently, we must re-establish the existence, regularity, and asymptotic behavior of the solution $u$, as well as the properties of the $P$-function, etc. Additionally, even in the Laplace case $(p=2)$, our results are new. 

Another point to note is that Theorem 1.2 $(p=n)$ in the paper \cite{CL} and the theorems in the paper \cite{PT} include either the case $\mathcal{C} = \tilde{\mathcal{C}}\times\mathbb{R}^{k}$, where $\tilde{\mathcal{C}}\subset\mathbb{R}^{n-k}$ is an open convex cone with a vertex at the origin which contains no lines and $k \in\{1,\ldots, n\}$, or the case the centre of the spherical cap $x_{0}\in \partial \overline{\mathcal{C}}\backslash {O}$ and $\Sigma$ is a half-sphere lying over a flat portion of $\partial \overline{\mathcal{C}}$.
This immediately prompts the question of whether the results of this paper can be extended to the above cases. However, for the latter case, it seems impossible. In fact, the condition $\langle\nabla u(x), \nu_{\Gamma}\rangle=0  \text{ on } \Gamma$ does not hold.

\quad

The paper is organized as follows. In Section \ref{sec2}, we provide a brief overview of the notations, fundamental tools, the isoperimetric inequality in the convex cone, and the Heintze-Karcher type inequality in the convex cone. In Section \ref{sec3}, we establish the existence, regularity, and asymptotic behavior of the solution $u$ for the problem (\ref{eqn:1.1}).
In Section \ref{sec4}, we determine the value of constant $C$ in the overdetermined condition (\ref{Overdeterminedcondition}). By applying the maximum principle to a $P$-function, the isoperimetric inequality in the convex cone, and the value of $C$, we can establish a lower bound on the mean curvature of $\overline{\Sigma}$. Finally, we utilize the Heintze-Karcher type inequality to prove Theorem \ref{thm:1.2}.

\section{Preliminaries}\label{sec2}
\subsection{Divergence Theorem and comparison theorem}

In the current framework, we need the following generalized version of the divergence theorem and the comparison theorem. Throughout this paper, $\nu$ represents the unit outward normal vector, unless otherwise stated.
\begin{lemma}[Lemma 4.3 in \cite{CL}]\label{div}
     Let $E$ be a bounded open subset of $\mathbb{R}^{n}$ with Lipschitz boundary and let $f \in L^{1}(E)$. Assume that $\mathbf{a} \in C^{0}\left(\bar{E} ; \mathbb{R}^{n}\right)$ satisfies $\operatorname{div} \mathbf{a}=f$ in the sense of distributions in $E$. Then we have
\begin{equation}
    \int_{\partial E}\langle\mathbf{a}, \nu\rangle~ d \mathcal{H}^{n-1}=\int_{E} f(x)~ d x.
\end{equation}

\end{lemma}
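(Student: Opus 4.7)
The plan is to reduce this Gauss--Green identity to the classical divergence theorem by a two-parameter approximation that accommodates the low regularity of both $\mathbf{a}$ (merely continuous up to $\partial E$) and $\partial E$ (merely Lipschitz).

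First I would regularize the vector field: extend $\mathbf{a}$ to a continuous, compactly supported $\tilde{\mathbf{a}}$ on $\mathbb{R}^{n}$ via Tietze's theorem, and set $\mathbf{a}_{\varepsilon}:=\tilde{\mathbf{a}}*\rho_{\varepsilon}\in C_{c}^{\infty}(\mathbb{R}^{n})$ for a standard mollifier $\rho_{\varepsilon}$. Then $\mathbf{a}_{\varepsilon}\to\tilde{\mathbf{a}}$ uniformly on $\mathbb{R}^{n}$. Next I would approximate $E$ from inside by a family of Lipschitz domains $E_{t}\Subset E$ obtained by flowing inward along a vector field that is uniformly transversal to $\partial E$ (built from a partition of unity subordinate to boundary charts in which $\partial E$ is the graph of a Lipschitz function). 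A short calculation in such charts, using Rademacher's theorem, yields $|E\setminus E_{t}|\to 0$, a uniform bound on $\mathcal{H}^{n-1}(\partial E_{t})$, and the flux convergence
\begin{equation*}
\int_{\partial E_{t}}\langle\mathbf{b},\nu_{t}\rangle\,d\mathcal{H}^{n-1}\;\longrightarrow\;\int_{\partial E}\langle\mathbf{b},\nu\rangle\,d\mathcal{H}^{n-1}\qquad\text{for every }\mathbf{b}\in C(\overline{E};\mathbb{R}^{n}).
\end{equation*}

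For fixed $t>0$ and $\varepsilon<\mathrm{dist}(E_{t},\partial E)$, the classical divergence theorem for Lipschitz domains applied to the smooth $\mathbf{a}_{\varepsilon}$ on $E_{t}$ gives
\begin{equation*}
\int_{\partial E_{t}}\langle\mathbf{a}_{\varepsilon},\nu_{t}\rangle\,d\mathcal{H}^{n-1}=\int_{E_{t}}\mathrm{div}\,\mathbf{a}_{\varepsilon}\,dx=\int_{E_{t}}(f*\rho_{\varepsilon})\,dx,
\end{equation*}
where the second identity uses that the distributional relation $\mathrm{div}\,\mathbf{a}=f$ on $E$ transfers to a pointwise relation under mollification at points whose $\varepsilon$-neighborhood lies in $E$. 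Sending $\varepsilon\to 0$ first, uniform convergence on $\partial E_{t}$ and $L^{1}$-convergence on $E_{t}$ give the identity with $\mathbf{a}$ and $f$ in place of their mollifications; then sending $t\to 0$, the flux convergence above handles the boundary integral and dominated convergence handles the volume integral, producing the claimed identity on $E$.

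The main obstacle is the boundary part of this approximation, namely establishing the flux convergence for arbitrary continuous vector fields, since the outward normals $\nu_{t}$ and $\nu$ are only defined $\mathcal{H}^{n-1}$-almost everywhere and live on distinct hypersurfaces. The cleanest route is to work in boundary charts where $\partial E$ is the graph $x_{n}=g(x')$ of a Lipschitz $g$; then $\partial E_{t}$ is the graph of $g-t$, both hypersurfaces have the same normal direction $(-\nabla g,1)/\sqrt{1+|\nabla g|^{2}}$ as a function of $\nabla g$, and the two flux integrals differ only by the shift $\mathbf{b}(x',g(x'))\mapsto\mathbf{b}(x',g(x')-t)$, whose limit as $t\to 0$ is controlled by uniform continuity of $\mathbf{b}$ on $\overline{E}$. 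A partition-of-unity argument then assembles the global convergence, after which the rest of the proof is a routine passage to the limit.
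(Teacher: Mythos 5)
The paper does not prove this lemma at all: it is imported verbatim as Lemma 4.3 of \cite{CL}, so there is no in-paper argument to compare against. Judged on its own, your proof is the standard (and correct) route to such Gauss--Green formulas: mollify the field, exhaust the Lipschitz domain from inside, apply the classical divergence theorem, and pass to the limit in $\varepsilon$ and then in $t$. The two delicate points are both handled correctly in substance: the identification $\operatorname{div}\mathbf{a}_{\varepsilon}=(f\chi_{E})*\rho_{\varepsilon}$ is valid precisely on the set of points whose $\varepsilon$-ball lies in $E$, which is why the order of limits ($\varepsilon\to0$ before $t\to0$, with $\varepsilon<\mathrm{dist}(E_{t},\partial E)$) matters and is respected; and the flux convergence is reduced to a vertical translation in graph coordinates, where the normal and the surface element depend only on $\nabla g$ and hence cancel, leaving only the uniform continuity of $\mathbf{b}$ on $\overline{E}$.

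One spot is looser than the rest and deserves a word: you build $E_{t}$ by a \emph{single global} inward flow obtained from a partition of unity, but then analyze the flux in a \emph{single chart} as if $\partial E_{t}$ were exactly the graph of $g-t$ there. In overlapping charts the ``vertical'' directions disagree, so the globally flowed boundary is not a pure vertical translate in any one chart; what you actually get is a bi-Lipschitz image of $\partial E$ close to the identity, and one must then argue convergence of normals and surface measures $\mathcal{H}^{n-1}$-a.e.\ (Rademacher plus dominated convergence), which is true but not free. The cleaner fix, which avoids the issue entirely, is to localize the \emph{vector field} rather than the domain: write $\mathbf{a}=\sum_{k}\chi_{k}\mathbf{a}$ for a partition of unity subordinate to the boundary charts plus one interior piece, note that $\operatorname{div}(\chi_{k}\mathbf{a})=\chi_{k}f+\langle\mathbf{a},\nabla\chi_{k}\rangle\in L^{1}(E)$, and prove the identity for each compactly-supported-in-a-chart piece by the genuine vertical translation $x_{n}\mapsto x_{n}-t$; summing over $k$ recovers the statement. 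With that adjustment the argument is complete.
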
 

Although Lemma 2.4 in \cite{CL} is about anisotropic $n$-Laplace, we can use the same method to prove the following result.

\begin{lemma}\label{comparison}
     Let $\mathcal{C} \subset \mathbb{R}^{n}$ be an open convex cone, $E \subset \mathbb{R}^{n}$ be a bounded domain and $\Gamma_{0}:=\mathcal{C}\cap \partial E$ such that $\mathcal{H}^{n-1}\left(\Gamma_{0}\right)>0$ and $\mathcal{C} \cap E$ is connected. Assume that $p>1$, $u, v \in W^{1, p}(\mathcal{C} \cap E) \cap C^{0}\left((\mathcal{C} \cap E) \cup \Gamma_{0}\right)$ satisfy
$$
\begin{cases}-\Delta_{p} u \leq-\Delta_{p} v & \text { in } \mathcal{C} \cap E, \\ u \leq v & \text { on } \Gamma_{0}, \\ \langle \nabla u, \nu\rangle=\langle \nabla v, \nu\rangle=0 & \text { on } \Gamma_{1}:=\partial \overline{\mathcal{C}}\cap E.
\end{cases}
$$
Then $u \leq v$ in $\mathcal{C} \cap E$.
\end{lemma}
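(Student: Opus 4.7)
The plan is to follow the standard approach for $p$-Laplace comparison by testing against the positive part $(u-v)^+$, with the Neumann condition on $\Gamma_1$ absorbed into the admissible class of test functions. First I would interpret the hypothesis $-\Delta_p u \leq -\Delta_p v$ as the weak inequality
$$\int_{\mathcal{C}\cap E}\langle a(\nabla u) - a(\nabla v), \nabla \varphi\rangle\, dx \leq 0$$
for every non-negative $\varphi \in W^{1,p}(\mathcal{C}\cap E)$ whose trace vanishes on $\Gamma_0$, where $a(\xi) := |\xi|^{p-2}\xi$. The conditions $\langle \nabla u,\nu\rangle = \langle \nabla v,\nu\rangle = 0$ on $\Gamma_1$ are encoded in this formulation precisely by allowing test functions to be nonzero on $\Gamma_1$, exactly as in the weak formulation of \eqref{eqn:1.1}.

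The natural choice is then $\varphi := (u-v)^+$. Since $u,v \in C^{0}((\mathcal{C}\cap E)\cup \Gamma_0)$ with $u \leq v$ on $\Gamma_0$, the trace of $\varphi$ on $\Gamma_0$ vanishes, and $\varphi \in W^{1,p}(\mathcal{C}\cap E)$ by standard truncation results, so $\varphi$ is admissible. On the set $\{u>v\}$ one has $\nabla\varphi = \nabla u - \nabla v$, and on its complement $\nabla \varphi = 0$ a.e., so the weak inequality reduces to
$$\int_{\{u>v\}}\langle a(\nabla u) - a(\nabla v), \nabla u - \nabla v\rangle\, dx \leq 0.$$
The pointwise monotonicity of the $p$-Laplacian operator for $p>1$, namely $\langle a(\xi)-a(\eta),\xi-\eta\rangle \geq 0$ with equality iff $\xi = \eta$, then forces $\nabla u = \nabla v$ a.e. on $\{u>v\}$. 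Hence $\nabla(u-v)^+ = 0$ a.e. in $\mathcal{C}\cap E$.

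Finally, since $\mathcal{C}\cap E$ is connected, $(u-v)^+$ must be a.e. constant. The trace on $\Gamma_0$ being zero, together with $\mathcal{H}^{n-1}(\Gamma_0) > 0$, forces that constant to be zero, which gives $u \leq v$ in $\mathcal{C}\cap E$.

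The main obstacle will be the careful verification that $(u-v)^+$ is an admissible test function in this mixed-boundary weak formulation. This requires the Lipschitz regularity of $\partial(\mathcal{C}\cap E)$ (obtained by combining the regularity of $\partial E$ with the smoothness of $\partial\overline{\mathcal{C}}\setminus\{O\}$) so that the trace and truncation theorems apply, together with a care for the vertex $O$ when it belongs to $\overline{E}$. In the singular range $1 < p < 2$, the plain monotonicity bound is weak and one invokes instead a Lindqvist-type inequality $\langle a(\xi)-a(\eta),\xi-\eta\rangle \geq c_p(|\xi|+|\eta|)^{p-2}|\xi-\eta|^2$ to conclude $\nabla u = \nabla v$ a.e. on $\{u>v\}$; once these technicalities are handled, the argument proceeds exactly as sketched.
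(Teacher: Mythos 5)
Your proposal is correct and is essentially the argument the paper has in mind: the paper omits the proof and refers to the method of Lemma 2.4 in \cite{CL}, which is exactly this test-function comparison — insert $\varphi=(u-v)^{+}$ (admissible because it vanishes on $\Gamma_{0}$ while being allowed to be nonzero on $\Gamma_{1}$, where the Neumann conditions kill the boundary contribution), invoke strict monotonicity of $\xi\mapsto|\xi|^{p-2}\xi$ to get $\nabla(u-v)^{+}=0$, and conclude via connectedness and $\mathcal{H}^{n-1}(\Gamma_{0})>0$. The only cosmetic remark is that, since $u-v$ is assumed continuous up to $\Gamma_{0}$, the final step can be phrased via continuity rather than trace theory, which sidesteps the boundary-regularity technicalities you flag.
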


\subsection{Isoperimetric inequality and Heintze-Karcher type inequality}

Given an open subset $D \subset \mathbb{R}^{n}$ and a measurable set $E \subset \mathbb{R}^{n}$, we recall the definition of relative perimeter of $E$ in $D$, given by
$$
P(E ; D) =\sup \left\{\int_{E} \operatorname{div} \Phi ~d x: \Phi \in C_{0}^{1}\left(D ; \mathbb{R}^{n}\right), |\Phi| \leq 1\right\} .
$$
When the set $E$ has locally finite
perimeter,then $$P(E ; D) =\int_{D \cap \partial^{*} E} 1 ~d \mathcal{H}^{n-1}.$$
where $\partial^{*} E$ is the reduced boundary of $E$. 

The following isoperimetric inequality can be found in \cite[Theorem 1.3]{CRS}, \cite[Theorem 2.2]{FI} and \cite[Theorem 1.1]{LP}.

\begin{theorem}[Isoperimetric inequality in the convex cone]\label{isoperimetric}
For each measurable set $E \subset \mathbb{R}^{n}$ with $0<\mathcal{H}^{n}(\mathcal{C} \cap E)<\infty$, the following inequality holds:
$$
\frac{P(E ; \mathcal{C})}{\mathcal{H}^{n}(\mathcal{C} \cap E)^{\frac{n-1}{n}}} \geq \frac{P\left(B_{1} ; \mathcal{C}\right)}{\mathcal{H}^{n}\left(\mathcal{C} \cap B_{1}\right)^{\frac{n-1}{n}}}.
$$
Moreover, the equality holds if and only if $\mathcal{C}\cap E=\mathcal{C} \cap B_{R}\left(O\right)$ for some $R>0$.
\end{theorem}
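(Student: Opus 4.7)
The plan is to prove Theorem~\ref{isoperimetric} via the optimal transport approach (as in Figalli--Indrei), which delivers both the sharp inequality and the rigidity in a single framework. First, by a standard smoothing argument and the scalings $\mathcal{H}^{n}(\mathcal{C}\cap B_{R})=R^{n}\mathcal{H}^{n}(\mathcal{C}\cap B_{1})$ and $P(B_{R};\mathcal{C})=R^{n-1}P(B_{1};\mathcal{C})$, I would reduce to the case that $E$ is a bounded smooth set with $\mathcal{H}^{n}(\mathcal{C}\cap E)=\mathcal{H}^{n}(\mathcal{C}\cap B_{1})$, so that the statement becomes $P(E;\mathcal{C})\geq n\,\mathcal{H}^{n}(\mathcal{C}\cap B_{1})=P(B_{1};\mathcal{C})$.

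Next, by Brenier's theorem there exists a convex potential $\psi$ on $\mathcal{C}\cap E$ whose gradient $T:=\nabla\psi$ transports the normalized volume measure on $\mathcal{C}\cap E$ onto that on $\mathcal{C}\cap B_{1}$. This yields the Monge-Amp\`ere identity $\det D^{2}\psi=1$ almost everywhere, and the arithmetic-geometric mean inequality then gives $\Delta\psi\geq n(\det D^{2}\psi)^{1/n}=n$. Applying the divergence theorem (Lemma~\ref{div}) after a suitable regularization, and splitting $\partial(\mathcal{C}\cap E)=(\mathcal{C}\cap\partial E)\cup(\partial\mathcal{C}\cap E)$, one obtains
\begin{equation*}
  n\,\mathcal{H}^{n}(\mathcal{C}\cap E)\leq\int_{\mathcal{C}\cap\partial E}\langle\nabla\psi,\nu\rangle\,d\mathcal{H}^{n-1}+\int_{\partial\mathcal{C}\cap E}\langle\nabla\psi,\nu_{\mathcal{C}}\rangle\,d\mathcal{H}^{n-1}.
\end{equation*}
The geometric key is the convex cone structure: since $O\in\overline{\mathcal{C}}$, the outer normal $\nu_{\mathcal{C}}(x)$ at every $x\in\partial\mathcal{C}\setminus\{O\}$ satisfies $\langle y,\nu_{\mathcal{C}}(x)\rangle\leq 0$ for all $y\in\overline{\mathcal{C}}$. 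Applied to $y=T(x)\in\overline{\mathcal{C}\cap B_{1}}$, this forces the second boundary integral to be non-positive; combined with $|\nabla\psi|=|T|\leq 1$, the first integral is bounded above by $P(E;\mathcal{C})$, which establishes the inequality.

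For the rigidity, tracing back the equalities yields $\Delta\psi=n(\det D^{2}\psi)^{1/n}$ a.e., so AM-GM equality forces $D^{2}\psi=I$ a.e.; thus on each connected component $\psi(x)=\tfrac12|x|^{2}+\langle c,x\rangle+d$ and $T(x)=x+c$. Equality in the boundary bound further demands $\langle x+c,\nu_{\mathcal{C}}(x)\rangle=0$ on $\partial\mathcal{C}\cap E$, and combined with the cone identity $\langle x,\nu_{\mathcal{C}}(x)\rangle=0$ this gives $\langle c,\nu_{\mathcal{C}}(x)\rangle=0$ for $\mathcal{H}^{n-1}$-a.e.\ $x\in\partial\mathcal{C}\cap E$. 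Provided $\mathcal{C}$ contains no lines, the normals of $\partial\mathcal{C}$ span $\mathbb{R}^{n}$, so $c=0$, whence $T=\mathrm{id}$ and $\mathcal{C}\cap E=\mathcal{C}\cap B_{1}$.

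The principal obstacle will be making the divergence theorem rigorous for the Brenier potential, which is only convex and generally not $C^{2}$: the Laplacian $\Delta\psi$ is a nonnegative Radon measure whose absolutely continuous part obeys the pointwise AM-GM bound, and one must ensure that the singular part does not spoil the estimate nor the trace of $\nabla\psi$ on $\partial(\mathcal{C}\cap E)$. This is resolved either by a smooth approximation of $\psi$ followed by passage to the limit, or by invoking Caffarelli's $W^{2,1}_{\mathrm{loc}}$ theory for solutions of the Monge-Amp\`ere equation together with a careful BV-trace argument. A secondary technical point is verifying in the rigidity step that the cone has no translation-invariant directions, which is exactly what allows one to conclude $c=0$ from the boundary condition on $\partial\mathcal{C}\cap E$.
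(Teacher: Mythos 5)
The paper does not actually prove Theorem \ref{isoperimetric}: it is quoted from the literature, with pointers to Lions--Pacella, Cabr\'e--Ros-Oton--Serra, and Figalli--Indrei. Your proposal is therefore not competing with an argument in the paper but reconstructing one of the cited proofs, namely the optimal-transport (Brenier map) proof in the style of Figalli--Maggi--Pratelli and Figalli--Indrei. The outline is faithful: reduction by scaling, $\det D^{2}\psi=1$ a.e., the arithmetic--geometric mean bound $\Delta\psi\geq n$, the divergence theorem, and, crucially, the correct identification of the one place where convexity of the cone enters, namely $\langle T(x),\nu_{\mathcal{C}}(x)\rangle\leq 0$ on $\partial\mathcal{C}\cap E$ because $T(x)\in\overline{\mathcal{C}}$ and $\langle y,\nu_{\mathcal{C}}(x)\rangle\leq 0$ for every $y\in\overline{\mathcal{C}}$. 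For comparison, the ABP proof of Cabr\'e--Ros-Oton--Serra (also cited by the paper) replaces the Brenier map by $\nabla v$ for $v$ solving a Neumann problem with $\Delta v=\mathrm{const}$ in $\mathcal{C}\cap E$ and $\partial_{\nu}v=0$ on $\partial\mathcal{C}\cap E$; this trades the Monge--Amp\`ere regularity issues you flag for an analysis of the lower contact set, where the same cone inequality $\langle\nabla v,\nu_{\mathcal{C}}\rangle\leq 0$ does the work.

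Two caveats. First, the obstacle you name at the end --- justifying $\int_{\mathcal{C}\cap E}\Delta\psi\,dx\leq\int_{\partial(\mathcal{C}\cap E)}\langle\nabla\psi,\nu\rangle\,d\mathcal{H}^{n-1}$ for a merely convex $\psi$ and showing the singular part of $D^{2}\psi$ does not spoil either direction of the argument --- is genuinely the hard part of this route, and your text defers it rather than resolves it; as a proof it is incomplete at exactly that point, though the gap is filled in the cited references. Second, your rigidity step needs, and correctly isolates, the hypothesis that $\mathcal{C}$ contains no lines in order to get $c=0$ from $\langle c,\nu_{\mathcal{C}}\rangle=0$ on $\partial\mathcal{C}\cap E$ (and strictly speaking one must argue that the normals over the portion $\partial\mathcal{C}\cap E$, not over all of $\partial\mathcal{C}$, suffice, e.g.\ by first using $T=\mathrm{id}+c$ to identify $\mathcal{C}\cap E$ with $(\mathcal{C}\cap B_{1})-c$ and then forcing $c$ into the lineality space of $\mathcal{C}$). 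This caveat is in fact a sharpening of the statement as the paper records it: for cones of the form $\tilde{\mathcal{C}}\times\mathbb{R}^{k}$ with $k\geq 1$ --- a half-space, for instance, which is not excluded by the paper's smoothness hypothesis on $\partial\overline{\mathcal{C}}\setminus\{O\}$ --- balls centered at points of the lineality space also achieve equality, so the conclusion that the center must be $O$ requires the no-lines assumption. Your argument, properly completed, makes that visible, which the paper's bare citation does not.
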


\quad

From the equation (\ref{p-harmonic})
and 
$$\Delta u=-H|\nabla u|+\dfrac{u_{ij}u_{i}u_{j}}{|\nabla u|^{2}},$$ we know
\begin{equation}\label{mean curvature}
    |\nabla u|^{3} H=(p-1)u_{ij}u_{i}u_{j},
\end{equation}
where the mean curvature $H$ is with respect to the unit normal $\nu=-\dfrac{\nabla u}{|\nabla u|}$ pointing the exterior of $\Omega$.

We introduce a geometric inequality from the proof of \cite[Theorem 6.3]{PT}. This inequality will be important for obtaining the rigidity result.
\begin{lemma}[Heintze-Karcher type inequality in the convex cone]\label{HK}
Let $\overline{\Sigma}\subset \overline{\mathcal{C}}$ be an embedded $C^{2,\alpha}$ hypersurface with the boundary $\partial{\overline{\Sigma}}\subset \partial \overline{\mathcal{C}}\backslash \{O\}$ such that $\overline{\Sigma}$ and $\partial \overline{\mathcal{C}}$ along $\partial{\overline{\Sigma}}$ intersect orthogonally. Moreover, the closure of the region bounded by the hypersurface $\overline{\Sigma}$ and the boundary $\partial \overline{\mathcal{C}}$ contains the origin $O$.  Let $H_{\overline{\Sigma}}$ be the mean curvature of $\overline{\Sigma}$ with respect to $\nu_{\overline{\Sigma}}$.
If $H_{\overline{\Sigma}}>0$, then $$\int_{\overline{\Sigma}} \dfrac{n-1}{H_{\overline{\Sigma}}}~ d \mathcal{H}^{n-1}\geq n\mathcal{H}^{n}(\Omega\cap{\mathcal{C}})$$ and equality holds if and only if $\overline{\Sigma} = \partial B_R(O) \cap \overline{\mathcal{C}}$ for some $R>0$.
\end{lemma}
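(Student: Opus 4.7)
My plan is to adapt Ros's classical proof of the Heintze--Karcher inequality to the convex cone setting. Let $D := \Omega \cap \mathcal{C}$ denote the bounded region enclosed by $\overline{\Sigma}$ and $\partial \overline{\mathcal{C}}$, which contains $O$ by hypothesis, and consider the auxiliary mixed boundary value problem
\begin{equation*}
\begin{cases}
\Delta f = 1 & \text{in } D, \\
f = 0 & \text{on } \overline{\Sigma}, \\
\langle \nabla f, \nu \rangle = 0 & \text{on } (\partial \overline{\mathcal{C}} \cap \overline{D}) \setminus \overline{\Sigma}.
\end{cases}
\end{equation*}
Existence and uniqueness follow by the direct method. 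The orthogonal intersection of $\overline{\Sigma}$ and $\partial \overline{\mathcal{C}}$ along $\partial \overline{\Sigma}$ serves as the compatibility condition at the edge which, via a local flattening of $\partial \overline{\mathcal{C}}$ and even reflection, upgrades $f$ to $C^{2,\alpha}(\overline{D} \setminus \{O\})$; the vertex is handled by an exhaustion argument since $\{O\}$ has zero capacity.

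By the divergence theorem, $\mathcal{H}^{n}(D) = \int_{\overline{\Sigma}} \partial_\nu f \, d\mathcal{H}^{n-1}$. Applying Reilly's identity on $D$: on $\overline{\Sigma}$, the Dirichlet condition kills tangential derivatives, leaving the integrand $H_{\overline{\Sigma}} (\partial_\nu f)^2$; on $\partial \overline{\mathcal{C}} \cap \overline{D}$, the Neumann condition kills the normal derivative, leaving $II_{\partial \overline{\mathcal{C}}}(\nabla^{\top} f, \nabla^{\top} f) \geq 0$ by convexity of $\mathcal{C}$. Combined with the pointwise Cauchy--Schwarz bound $|\nabla^{2} f|^{2} \geq (\Delta f)^{2}/n = 1/n$, this yields
\[
\frac{n-1}{n}\, \mathcal{H}^{n}(D) \;\geq\; \int_{\overline{\Sigma}} H_{\overline{\Sigma}} (\partial_\nu f)^{2} \, d\mathcal{H}^{n-1}.
\]
Applying Cauchy--Schwarz on $\overline{\Sigma}$ (using $H_{\overline{\Sigma}} > 0$),
\[
\mathcal{H}^{n}(D)^{2} = \Bigl(\int_{\overline{\Sigma}} \partial_\nu f\, d\mathcal{H}^{n-1}\Bigr)^{2} \leq \Bigl(\int_{\overline{\Sigma}} H_{\overline{\Sigma}} (\partial_\nu f)^{2}\Bigr) \Bigl(\int_{\overline{\Sigma}} \tfrac{1}{H_{\overline{\Sigma}}}\Bigr),
\]
and combining the two displays produces the claimed inequality.

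For rigidity, both Cauchy--Schwarz steps must be equalities: the pointwise one forces $\nabla^{2} f = \tfrac{1}{n} I$, so $f(x) = \tfrac{|x - x_{0}|^{2}}{2n} + c$ for some $x_0 \in \mathbb{R}^n$ and $c \in \mathbb{R}$. The Neumann condition combined with the cone identity $\langle x, \nu(x)\rangle \equiv 0$ on $\partial \overline{\mathcal{C}} \setminus \{O\}$ gives $\langle x_{0}, \nu(x)\rangle = 0$ on an open portion of $\partial \overline{\mathcal{C}}$; since $\mathcal{C}$ is a proper convex cone with $\partial \overline{\mathcal{C}} \setminus \{O\}$ smooth, its outward normals span $\mathbb{R}^{n}$, forcing $x_{0} = O$. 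Then $f = 0$ on $\overline{\Sigma}$ identifies $\overline{\Sigma} = \partial B_{R}(O) \cap \overline{\mathcal{C}}$ for suitable $R > 0$. The main obstacle I anticipate is Step 1: justifying Reilly's formula requires enough regularity of $f$ near the edge $\partial \overline{\Sigma}$ where Dirichlet and Neumann data meet, and establishing this regularity is precisely where the orthogonal intersection hypothesis is indispensable.
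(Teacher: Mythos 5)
Your proposal is correct and follows essentially the same route as the source the paper relies on: the paper gives no proof of Lemma \ref{HK} but imports it from the proof of Theorem 6.3 in \cite{PT}, which is exactly the Ros-type argument you reproduce (the mixed torsion problem $\Delta f=1$ with Dirichlet data on $\overline{\Sigma}$ and homogeneous Neumann data on $\partial\overline{\mathcal{C}}$, Reilly's identity with the cone's convexity discarding the second-fundamental-form term, and two applications of Cauchy--Schwarz, with equality forcing $\nabla^2 f=\tfrac1n I$ and the Neumann condition pinning the center at the vertex). You also correctly single out the only delicate point, namely the regularity of $f$ near the corner $\partial\overline{\Sigma}$ needed to justify Reilly's formula, which is precisely where the orthogonality hypothesis is used in \cite{PT}.
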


\section{Existence, Regularity and Asymptotic behavior}\label{sec3}

We first obtain the existence and uniqueness of the problem (\ref{eqn:1.1}). The proof outlined below is based on the ideas from \cite{CSZ, FM}.
\begin{lemma}\label{existence}
    Let $\Omega\subset \mathbb{R}^{n}$ be a bounded domain with a boundary of class $C^{2,\alpha}$. There exists a unique solution $u\in W^{1,p}_{loc}(\overline{{\mathcal{C}}}\backslash \Omega)$ to the problem (\ref{eqn:1.1}) in the distribution sense.  Moreover, $0< u\leq 1$
and
    \begin{equation}\label{eq:cap}
        {\rm Cap}_{p}(\Omega\cap{\mathcal{C}}; \mathcal{C})=\frac{1}{p}\int_{{\Omega}^{\mathcal{C}}}|\nabla u|^{p} ~dx.
    \end{equation} 
\end{lemma}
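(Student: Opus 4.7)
The plan is to build $u$ by exhausting $\Omega^{\mathcal{C}}$ with bounded mixed boundary value problems and then passing to the limit, in the spirit of Fogagnolo--Mazzieri. For each $R$ so large that $\overline{\Omega}\subset B_R(O)$, set $\Omega_R:=\Omega^{\mathcal{C}}\cap B_R$ and minimize the functional $v\mapsto \frac{1}{p}\int_{\Omega_R}|\nabla v|^p\,dx$ over the affine class
\[
\mathcal{A}_R := \{\,v\in W^{1,p}(\Omega_R) : v=1 \text{ on }\Sigma,\ v=0 \text{ on } \mathcal{C}\cap \partial B_R\,\}.
\]
Strict convexity of the functional together with the Poincar\'e inequality (which applies because Dirichlet data is imposed on a set of positive capacity) yields a unique minimizer $u_R$; its Euler--Lagrange equation is $\Delta_p u_R=0$ in $\Omega_R$ together with the natural Neumann condition $\langle |\nabla u_R|^{p-2}\nabla u_R,\nu_{\Gamma}\rangle=0$ on $\Gamma\cap B_R$.

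Comparison with the constants $0$ and $1$ via Lemma \ref{comparison} gives $0\le u_R\le 1$, and the strong maximum principle upgrades this to $u_R>0$ in $\Omega_R$. For $R_1<R_2$, extending $u_{R_1}$ by zero past $\partial B_{R_1}$ produces a subsolution in $\Omega_{R_2}$ matching the Dirichlet data on $\Sigma$ and the Neumann condition on $\Gamma$; a second application of Lemma \ref{comparison} yields the monotonicity $u_{R_1}\le u_{R_2}$ on $\Omega_{R_1}$. Thus $u(x):=\lim_{R\to\infty}u_R(x)$ exists pointwise and satisfies $0<u\le 1$. Interior and boundary $C^{1,\alpha}$ estimates for the $p$-Laplace equation—applicable up to both $\Sigma$ and $\Gamma$ thanks to the smoothness of $\partial\overline{\mathcal{C}}\setminus\{O\}$—give uniform local bounds, so the convergence $u_R\to u$ is in $C^1_{loc}$ away from the vertex. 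Passing to the limit in the weak formulation exhibits $u\in W^{1,p}_{loc}(\overline{\mathcal{C}}\setminus\Omega)$ as a distributional solution of the PDE with the Neumann condition preserved on $\Gamma$.

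For the decay at infinity, fix $R_0$ with $\overline{\Omega}\subset B_{R_0}$ and use the radial $p$-harmonic function $w(x)=(|x|/R_0)^{(p-n)/(p-1)}$ as a global barrier. Its gradient is radial, and since $\Gamma\subset \partial\overline{\mathcal{C}}$ is foliated by rays from $O$, the outward normal $\nu_{\Gamma}$ is perpendicular to the radial direction, so $\langle\nabla w,\nu_{\Gamma}\rangle=0$ on $\Gamma$. Hence $w$ is a supersolution compatible with the Neumann condition; Lemma \ref{comparison} gives $u_R\le w$ on $\Omega_R$ and in the limit $u(x)\le w(x)\to 0$ as $|x|\to\infty$. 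Uniqueness is an immediate application of Lemma \ref{comparison} to the difference of two solutions. Finally, for the capacity identity (\ref{eq:cap}): extending $u$ by $1$ inside $\Omega$ furnishes an admissible competitor in (\ref{eqn:capacity}), so $\mathrm{Cap}_p(\Omega\cap\mathcal{C};\mathcal{C})\le \frac{1}{p}\int_{\Omega^{\mathcal{C}}}|\nabla u|^p\,dx$; testing the weak equation for $u_R$ against $u_R$ itself (all boundary terms vanish either by Dirichlet data being zero or by the Neumann condition) gives $\frac{1}{p}\int_{\Omega_R}|\nabla u_R|^p\,dx=\mathrm{Cap}_p(\Omega\cap\mathcal{C};\mathcal{C}\cap B_R)$, and monotone convergence of both sides closes the identity.

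The main obstacle I anticipate is cleanly carrying the mixed Dirichlet--Neumann condition on the unbounded domain through all four stages of the argument (variational existence, monotonicity, passage to the limit, and the decay barrier). The structural fact that makes this feasible is geometric rather than analytic: because every radial function automatically satisfies the Neumann condition on $\Gamma$, the explicit model profile (\ref{model}) doubles as the supersolution needed to pin down the behavior at infinity, which is exactly the feature that fails in the anisotropic setting of \cite{CL} and required them to develop a different scheme.
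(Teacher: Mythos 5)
Your overall scheme coincides with the paper's: exhaust $\Omega^{\mathcal{C}}$ by $B_R\cap\Omega^{\mathcal{C}}$, solve the mixed local problems variationally, use Lemma \ref{comparison} for the bounds $0\le u_R\le 1$, the monotonicity in $R$, and the uniqueness, pass to the limit with interior $C^{1,\alpha}$ estimates and Arzel\`a--Ascoli, and control the behavior at infinity by radial multiples of $\Gamma_p$, which satisfy the Neumann condition on $\Gamma$ for exactly the geometric reason you give. All of that matches the paper's Steps 1 and 2.

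The gap is in your proof of the identity \eqref{eq:cap}. First, the claim that extending $u$ by $1$ inside $\Omega$ ``furnishes an admissible competitor in \eqref{eqn:capacity}'' is false in general: admissibility requires $u-f=\omega\chi_{\mathcal{C}}$ with $\omega\in W^{1,p}_{0}(\mathbb{R}^{n}\setminus\overline{\Omega})$, hence in particular $u\in L^{p}$ near infinity. Since $u\sim\gamma|x|^{(p-n)/(p-1)}$, one has $u\in L^{p}(\mathcal{C}\setminus B_1)$ only when $p(n-p)/(p-1)>n$, i.e.\ $p^{2}<n$; for $\sqrt{n}\le p<n$ (already for $p=2$, $n=3$) the extended $u$ does not lie in $\mathcal{A}$, even though its Dirichlet energy is finite. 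Second, ``monotone convergence of both sides'' is not available: the numbers $\tfrac1p\int_{\Omega_R}|\nabla u_R|^{p}={\rm Cap}_{p}(\Omega\cap\mathcal{C};B_R\cap\mathcal{C})$ are nonincreasing in $R$, but the integrands $|\nabla u_R|^{p}$ are not pointwise monotone, so Fatou only yields $\tfrac1p\int_{\Omega^{\mathcal{C}}}|\nabla u|^{p}\le\lim_{R}{\rm Cap}_{p}(\Omega\cap\mathcal{C};B_R\cap\mathcal{C})$, and you have not shown that this limit equals ${\rm Cap}_{p}(\Omega\cap\mathcal{C};\mathcal{C})$ rather than something strictly larger. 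The paper closes both holes with two separate arguments: the convergence \eqref{limcap} of the truncated capacities, proved by approximating an arbitrary competitor in $\mathcal{A}$ by ones supported in $B_R$, and Mazur's lemma applied to $\nabla u_{R_j}\rightharpoonup\nabla u$ in $L^{p}$, which produces genuinely admissible convex combinations $\sum_i\lambda_i u_{R_i}\in\mathcal{A}$ approximating $\nabla u$ strongly and yields $\tfrac1p\int_{\Omega^{\mathcal{C}}}|\nabla u|^{p}\ge{\rm Cap}_{p}(\Omega\cap\mathcal{C};\mathcal{C})$. Your step could be repaired along the same lines (or by cutting $u$ off at infinity and using the decay of $u$ and $\nabla u$ to control the error terms), but as written the final identity does not follow.
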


\begin{proof}
 Recall $${\rm Cap}_{p}(\Omega\cap{\mathcal{C}};\mathcal{C}):=\inf_{\phi}\left\{\frac{1}{p}\int_{{\mathcal{C}}}|\nabla\phi|^{p} ~dx: \phi\in \mathcal{A}\right\},$$
    where $$\mathcal{A}:=\left\{\phi\in W^{1,p}(\mathcal{C}): \phi-f=\omega \chi_{ {\mathcal{C}}} \text{ for }\omega\in W^{1,p}_{0}(\mathbb{R}^{n}\backslash \overline{\Omega}) \right\}.$$ 
   Here the function $f\in C^{\infty}_{c}(\mathbb{R}^{n})$, $0\leq f\leq 1$ and $f=1$ in a neighborhood of $\Omega.$

\textbf{Step 1:} We get a local version of the results.

 We consider 
$${\rm Cap}_{p}(\Omega\cap{\mathcal{C}};B_{R} \cap \mathcal{C}):=\inf_{\phi}\left\{\frac{1}{p}\int_{{B_{R} \cap\mathcal{C}}}|\nabla\phi|^{p} ~dx: \phi\in \mathcal{A}_{R}\right\},$$
    where $$\mathcal{A}_{R}:=\left\{\phi\in W^{1,p}(B_{R} \cap\mathcal{C}):  \phi-f=\omega^{R} \chi_{ {\mathcal{C}}} \text{ for }\omega^{R}\in W^{1,p}_{0}(B_{R} \backslash \overline{\Omega}) \right\}$$ for large $R$ such that the support set of $f$ is contained in $B_{R}$.

Then we choose a minimizing sequence $\{u_{j}\}\in\mathcal{A}_{R}$ such that $$\frac{1}{p}\int_{B_{R} \cap{\mathcal{C}}}|\nabla u_{j}|^{p} dx\rightarrow {\rm Cap}_{p}(\Omega\cap{\mathcal{C}};B_{R} \cap\mathcal{C}).$$ 

We set $$u_{j}:=f+\omega^{R}_{j}\chi_{ {\mathcal{C}}}.$$
It's easy to see that $\omega^{R}_{j}\chi_{ {\mathcal{C}}}\in W^{1,p}({B_{R} \cap\mathcal{C}}).$ Then by $ \omega_{j}^{R}\in W^{1,p}_{0}(B_{R} \backslash \overline{\Omega})$ and the Poincar\'{e} inequality in \cite[Corollary 4.5.3.]{ZW}, there exists a constant $C(R)$ depending on $R$ such that$$\int_{B_{R} \cap\mathcal{C}}|\omega^{R}_{j}\chi_{\mathcal{C}}|^{p} ~dx\leq C(R)\int_{B_{R} \cap\mathcal{C}}|\nabla \omega^{R}_{j}\chi_{\mathcal{C}}|^{p} ~dx$$
$$\leq C(R)(\int_{B_{R} \cap\mathcal{C}}|\nabla u_{j}|^{p}~ dx+\int_{B_{R} \cap\mathcal{C}} |\nabla f|^{p} ~dx) ,$$
which implies that $||u_{j}||_{W^{1,p}(B_{R} \cap\mathcal{C})}$ is bounded. So, there exists a subsequence (which we still denote as $\{u_j\}$ for convenience) such that $u_{j}\rightharpoonup u_{R} \in W^{1,p}(B_{R} \cap\mathcal{C})$ in the norm  $W^{1,p}$ as $j\rightarrow+\infty.$ Then the weak convergence implies $$\frac{1}{p}\int_{B_{R} \cap\mathcal{C}}|\nabla u_{R}|^{p} ~dx \leq \frac{1}{p}\lim\limits_{j\rightarrow +\infty}\int_{B_{R} \cap\mathcal{C}}|\nabla u_{j}|^{p}~ dx={\rm Cap}_{p}(\Omega\cap{\mathcal{C}};B_{R} \cap\mathcal{C}).$$

On the other hand, by $u_{j}=f+\omega^{R}_{j}\chi_{{\mathcal{C}}}$ and $||\omega^{R}_{j}\chi_{{\mathcal{C}}}||_{W^{1,p}}\leq||u_{j}||_{W^{1,p}}+||f||_{W^{1,p}},$ the sequence $\omega^{R}_{j}\chi_{{\mathcal{C}}}$ is bounded in $ W^{1,p}({B_{R} \cap\Omega^{\mathcal{C}}}),$ 
 there exists a function $\omega^{R}\in W^{1,p}(B_{R} \cap\Omega^{\mathcal{C}})$ such that $\omega^{R}_{j}\chi_{ {\mathcal{C}}}\rightharpoonup \omega^{R}$ in the norm  $W^{1,p}.$
Moreover, due to $ \omega_{j}^{R}\in W^{1,p}_{0}(B_{R} \backslash \overline{\Omega})$, we also know  $\omega^{R}=0$ on $\Sigma\cup (\partial B_{R}\cap \mathcal{C})$. Then by applying the extension theorem to $\omega^{R}$, we can find $\overline{\omega}^{R}\in W^{1,p}_{0}(B_{R} \backslash \overline{\Omega})$ such that $\overline{\omega}^{R}\chi_{\mathcal{C}}=\omega^{R}.$

Hence, we get $$u_{j}=f+\omega^{R}_{j}\chi_{ {\mathcal{C}}}\stackrel{W^{1,p}}{\rightharpoonup}u_{R}=f+\overline{\omega}^{R} \chi_{ {\mathcal{C}}}.$$ 
Therefore, $u_{R}\in \mathcal{A}_{R}$ and $${\rm Cap}_{p}(\Omega\cap{\mathcal{C}};B_{R} \cap\mathcal{C})\leq \frac{1}{p}\int_{B_{R} \cap\mathcal{C}}|\nabla u_{R}|^{p}~ dx \leq {\rm Cap}_{p}(\Omega\cap{\mathcal{C}};B_{R} \cap\mathcal{C}).$$
Meanwhile, from the expression of $u_{R}$, it can be seen that
\begin{equation}\label{caploc}
    {\rm Cap}_{p}(\Omega\cap{\mathcal{C}}; B_{R} \cap\mathcal{C})= \frac{1}{p}\int_{B_{R} \cap\mathcal{C}}|\nabla u_{R}|^{p} ~dx =\frac{1}{p}\int_{B_{R} \cap(\mathcal{C}\backslash\overline{\Omega})}|\nabla u_{R}|^{p} ~dx.
\end{equation}
    
Next, we solve the local version of the problem (\ref{eqn:1.1}). For any function $h_{R}\in W^{1,p}(B_{R} \cap\Omega^{{\mathcal{C}}})$ satisfying $h_{R}=0$ on $\Sigma$ and with bounded support in $B_{R} \cap\Omega^{{\mathcal{C}}}$, we can extend $h_{R}$ to $\overline{h}_{R}$ with bounded support in $B_{R}$ such that $\overline{h}_{R}\chi_{\mathcal{C}}=h_{R}$ in $\mathcal{C}$. Then $\overline{h}_{R}\in W^{1,p}_{0}(B_{R} \backslash \overline{\Omega})$ and $$u_{R}+th_{R}=f+\omega^{R}\chi_{ { {\mathcal{C}}}}+th_{R}=f+(\omega^{R}+t\overline{h}_{R})\chi_{ { {\mathcal{C}}}}\in \mathcal{A}_{R}.$$
Thus, by direct calculations, we have $$ 0=\left.\dfrac{d}{dt} \right|_{t=0}\frac{1}{p}\int_{B_{R} \cap\mathcal{C}}|\nabla (u_{R}+th_{R})|^{p} ~dx=\int_{B_{R} \cap\Omega^{\mathcal{C}}}(|\nabla u_{R}|^{p-2}\nabla u_{R}\cdot \nabla h_{R})~ dx.$$

To summarize, we have shown that the local problem 
    \begin{equation}\label{eqn:1.1loc}
\left\{
\begin{aligned}
&\Delta_{p}u=0\ \ {\rm in}\ \ B_{R} \cap{\Omega}^{\mathcal{C}},\\
&u=1\ \ {\rm on} \ \ \Sigma,\\
&\langle\nabla u(x), \nu_{\Gamma}\rangle=0\ \ {\rm on} \ \ \Gamma,\\
&u(x)=0\ \ {\rm on}\ \ \Gamma_{R}:=\partial B_{R}\cap\mathcal{C}.
\end{aligned}\right.
\end{equation}
has a weak solution $u_{R}\in  W^{1,p}({B_{R} \cap\Omega^{\mathcal{C}}})$.  

By applying Lemma $\ref{comparison}$, we know if $r>s$, then $u_{r}(x)\geq u_{s}(x)$ for any $x\in B_{s}\cap\Omega^{\mathcal{C}}$ and $0\leq u_{R}\leq 1$. 

\textbf{Step 2:}  We find a weak solution to the problem (\ref{eqn:1.1}).

From the monotonicity and boundedness of the solution as mentioned above, we can define a function $u:=\lim\limits_{R\rightarrow+\infty} u_{R}(x)$ for $x\in\Omega^{\mathcal{C}}$.
Let us prove that the function $u$ is a unique weak solution to the problem $(\ref{eqn:1.1})$.

We claim that \begin{equation}\label{limcap}
    \lim\limits_{R\rightarrow +\infty}{\rm Cap}_{p}(\Omega\cap{\mathcal{C}};B_{R} \cap\mathcal{C})= {\rm Cap}_{p}(\Omega\cap{\mathcal{C}};\mathcal{C}).
\end{equation}
In fact, on the one hand, from the definitions above, it's easy to see that $\mathcal{A}_{R}\subset \mathcal{A}$ and 
\begin{equation}\label{capgeq}
    {\rm Cap}_{p}(\Omega\cap{\mathcal{C}}; B_{R} \cap\mathcal{C})\geq {\rm Cap}_{p}(\Omega\cap{\mathcal{C}};\mathcal{C}).
\end{equation}
On the other hand, for any function $v\in\mathcal{A}$ with the form $v=f+V\chi_{\mathcal{C}},$ where $V\in W^{1,p}_{0}(\mathbb{R}^{n}\backslash \overline{\Omega})$, there exists a sequence $\{V_{j}\}\in C^{\infty}_{c}(\mathbb{R}^{n}\backslash \overline{\Omega})$ that converges to $V$ in $W^{1,p}.$ 
Note that when $j$ is fixed, $V_{j}\in W^{1,p}_{0}(B_{R}\backslash\overline{\Omega})$ for sufficiently large $R$. Then, from $V_{j}\chi_{\mathcal{C}}+f \in \mathcal{A}_{R}\rightarrow v$ in $W^{1,p},$ we have for any $\epsilon>0$, $$\frac{1}{p}\int_{\mathcal{C}}(|\nabla v|^{p} +\epsilon) ~dx \geq \frac{1}{p}\int_{\mathcal{C}}|\nabla V_{j}\chi_{\mathcal{C}}+\nabla f|^{p}~ dx \geq {\rm Cap}_{p}(\Omega\cap{\mathcal{C}};B_{R} \cap\mathcal{C}).$$
Due to  $$\dfrac{1}{p}\int_{\mathcal{C}}|\nabla V_{j}\chi_{\mathcal{C}}+\nabla f|^{p} ~dx \geq \lim_{R\rightarrow +\infty}{\rm Cap}_{p}(\Omega\cap{\mathcal{C}};B_{R} \cap\mathcal{C})$$ for any $j$, we know that $$\frac{1}{p}\int_{\mathcal{C}}|\nabla v|^{p} ~dx = \lim_{j\rightarrow +\infty}\frac{1}{p}\int_{\mathcal{C}}|\nabla V_{j}\chi_{\mathcal{C}}+\nabla f|^{p} ~ dx\geq \lim_{R\rightarrow +\infty}{\rm Cap}_{p}(\Omega\cap{\mathcal{C}};B_{R} \cap\mathcal{C}).$$
 
Now thanks to the interior regularity results in \cite[Theorems 1 and 2]{DiB}, the identities (\ref{caploc}) and (\ref{limcap}), we can deduce that the family of solutions $u_{R}$ is bounded in $C^{1,\alpha}_{loc}(B_{R}\cap \Omega^{\mathcal{C}})\cap W^{1,p}_{loc}(\overline{B_{R}\cap \Omega^{\mathcal{C}}})$, uniformly for large $R$. Then by the Arzel\`{a}-Ascoli theorem and a diagonal process, we can find a sequence $R_{j}\rightarrow +\infty$ such that $u_{R_{j}}\rightarrow u$ in $C^{1,\alpha}_{loc}(\Omega^{\mathcal{C}})$ and $u_{R_{j}}\rightharpoonup u$ in $W^{1,p}_{loc}(\overline{\Omega^{\mathcal{C}}}).$ Then we know that $u$ is a weak solution to the problem 
\begin{equation*}
\left\{
\begin{aligned}
&\Delta_{p}u=0 \text{ in }  {\Omega}^{\mathcal{C}},\\
&u=1 \text{ on }  {\Sigma},\\
&\langle\nabla u(x), \nu_{\Gamma}\rangle=0  \text{ on } \Gamma.
\end{aligned}\right.
\end{equation*} 
Moreover, the inequality $u>0$ and the asymptotic behavior $u(x)\rightarrow 0$ as $|x|\rightarrow+\infty$ can be proven by applying $u=\lim\limits_{R\rightarrow+\infty} u_{R}(x)$ for $x\in\Omega^{\mathcal{C}}$ and the comparison theorem to $u_{R}$ and the fundamental solutions. The inequality $u\leq 1$ and the uniqueness of the solution can be directly obtained from the comparison theorem. 

\textbf{Step 3:} We will show  the following expression of the capacity
$${\rm Cap}_{p}(\Omega\cap{\mathcal{C}};\mathcal{C})= \frac{1}{p}\int_{\mathcal{C}\backslash\overline{\Omega}}|\nabla u|^{p} ~dx.$$

From the Fatou's Lemma, 
\begin{equation}\label{eq:cap_p>}
\begin{aligned}
&{\rm Cap}_{p}(\Omega\cap{\mathcal{C}};\mathcal{C})=\lim_{R\rightarrow +\infty}{\rm Cap}_{p}(\Omega\cap{\mathcal{C}};B_{R} \cap\mathcal{C})\\
&=\lim_{R\rightarrow+\infty}\frac{1}{p}\int_{B_{R} \cap\Omega^{\mathcal{C}}}|\nabla u_{R}|^{p} ~dx\geq \frac{1}{p}\int_{\Omega^{\mathcal{C}}}|\nabla u|^{p}~ dx.
\end{aligned}
\end{equation}

Furthermore, since the sequence $\nabla u_{R}$ is bounded in $L^{p}(\Omega^{\mathcal{C}})$ uniformly in large $R$ and $u_{R_{j}}\rightarrow u$ in $C^{1,\alpha}_{loc}(\Omega^{\mathcal{C}})$, we know $\nabla u_{R_{j}} \rightharpoonup \nabla u$ in the norm  $L^{p}(\Omega^{\mathcal{C}}).$ Then by applying Mazur lemma, we know that for any $\epsilon>0, \exists \lambda_{i}\geq 0$ $(i=1,2, \ldots, N)$ with $\sum\limits^{N}_{i=1} \lambda_{i}=1$, such that $$||\nabla u- \sum^{N}_{i=1}\lambda_{i}\nabla u_{R_{i}}||_{L^{p}}<\epsilon.$$

Then, by using the fact that $u_{R_{i}}\in \mathcal{A}_{R_{i}}$ and the inequality (\ref{capgeq}), we can conclude 
\begin{equation}\label{eq:cap_p<}
\frac{1}{p}\int_{\Omega^{\mathcal{C}}}|\nabla u|^{p}~ dx \geq {\rm Cap}_{p}(\Omega\cap{\mathcal{C}};\mathcal{C}).
\end{equation}
Thus the claim follows from \eqref{eq:cap_p>} and \eqref{eq:cap_p<} and 
 this completes the proof of Lemma \ref{existence}.
\end{proof}

To get a higher regularity result, we need the assumption of orthogonal intersection.
\begin{theorem}\label{regularity}
    Suppose that $\overline{\Sigma}$ is perpendicular to $\partial \overline{\mathcal{C}}$ along $\partial{\overline{\Sigma}}$. 
  Let $u$ be a solution of (\ref{eqn:1.1}).
    Then the solution $u\in C^{1}(\overline{\mathcal{C}}\backslash \Omega).$ 
\end{theorem}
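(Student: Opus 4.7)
The plan is to combine standard interior and boundary regularity for the $p$-Laplace equation with a reflection argument at the edge $\partial\overline{\Sigma}$, where the Dirichlet boundary $\Sigma$ meets the conormal boundary $\Gamma$; the orthogonality hypothesis is precisely what makes the reflection produce enough regularity. First, by the interior $C^{1,\alpha}$ estimates of DiBenedetto--Tolksdorf, one has $u\in C^{1,\alpha}_{loc}(\Omega^{\mathcal{C}})$. Away from $\partial\overline{\Sigma}$ the boundary $\partial\Omega^{\mathcal{C}}$ splits into two disjoint $C^{2,\alpha}$ pieces carrying respectively the Dirichlet datum $u=1$ on $\Sigma\setminus\partial\overline{\Sigma}$ and the conormal condition $\langle\nabla u,\nu_{\Gamma}\rangle=0$ on $\Gamma\setminus\partial\overline{\Sigma}$. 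Lieberman-type boundary regularity for degenerate quasilinear equations applied to each piece yields $u\in C^{1,\alpha}$ up to each smooth portion separately. The only remaining issue is the continuity of $\nabla u$ at points $x_0\in\partial\overline{\Sigma}$.

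Fix such an $x_0$. Since $\partial\overline{\mathcal{C}}\setminus\{O\}$ is smooth, pick a smooth diffeomorphism $\Phi$ on a neighborhood $U$ of $x_0$ that straightens $\partial\overline{\mathcal{C}}\cap U$ to the hyperplane $\{y_n=0\}$, with $\Phi(\mathcal{C}\cap U)\subset\{y_n>0\}$. The pullback $\tilde u=u\circ\Phi^{-1}$ then solves a quasilinear equation of the form $\partial_i\tilde A^i(y,\nabla\tilde u)=0$ of the same $p$-Laplace ellipticity type (the $y$-dependence coming from the Jacobian of $\Phi$), together with the Neumann condition $\partial_{y_n}\tilde u=0$ on $\{y_n=0\}$ and the Dirichlet condition $\tilde u=1$ on $\Phi(\Sigma\cap U)$. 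The perpendicularity assumption translates into $\Phi(\Sigma\cap U)$ meeting $\{y_n=0\}$ orthogonally at $\Phi(x_0)$.

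I then extend $\tilde u$ by even reflection across $\{y_n=0\}$, and extend the coefficients correspondingly by $\tilde A^i(y',y_n,\xi):=R_n\tilde A^i(y',-y_n,R_n\xi)$ for $y_n<0$, where $R_n$ reverses the sign of the $n$-th component. The Neumann condition on $\{y_n=0\}$ is precisely the transmission condition that makes the even extension a weak solution of the extended (still $p$-Laplace type) equation in a full neighborhood of $\Phi(x_0)$. Because of the orthogonality, the local graph representation of $\Phi(\Sigma\cap U)$ has vanishing $y_n$-derivative on $\{y_n=0\}$, so its even reflection is a $C^{1,1}$ hypersurface on which the extended $\tilde u$ still carries the Dirichlet value $1$. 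Applying the Dirichlet $C^{1,\alpha}$ boundary gradient estimate for quasilinear equations to this extended problem gives $\tilde u\in C^{1,\alpha}$ near $\Phi(x_0)$, which pulls back to $u\in C^1$ near $x_0$, completing the argument.

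The main obstacle is verifying that the reflected boundary and the reflected coefficients satisfy the hypotheses of the boundary gradient estimate. This is exactly where the orthogonality is indispensable: without $\overline{\Sigma}\perp\partial\overline{\mathcal{C}}$ along $\partial\overline{\Sigma}$, the even extension of the Dirichlet boundary would have a corner at $\{y_n=0\}$ and only be Lipschitz there, so the boundary $C^{1,\alpha}$ estimate would fail and matching of the one-sided gradients of $u$ at $\partial\overline{\Sigma}$ could not be guaranteed.
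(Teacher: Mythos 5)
Your proof is correct and follows essentially the same route as the paper: flatten $\partial \overline{\mathcal{C}}$ near a point of $\partial\overline{\Sigma}$, extend the solution and the coefficients by even reflection across the flattened conormal boundary (the conormal condition being exactly the transmission condition for the reflected weak solution), use the orthogonality to see that the reflected Dirichlet boundary remains regular, and invoke Lieberman's boundary estimate for the extended problem. The only cosmetic difference is that the paper uses the specific flattening map of Pacella--Tralli, under which the boundary condition becomes literally $\partial_{z_n}v=0$, whereas with your generic straightening the transformed condition is the conormal condition $\tilde A^{n}(y,\nabla \tilde u)=0$ rather than $\partial_{y_n}\tilde u=0$ --- but that is precisely what your flux-matching step requires, so the argument goes through unchanged.
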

\begin{proof}
First, we can apply the regularity results in \cite{DiB} and \cite[Theorems 1 and 2]{Lie} to obtain $u\in C^{1,\alpha}(\Omega^{\mathcal{C}}\cup \Sigma\cup \Gamma).$ Next, we use the technique of flattening the boundary of the barrier and planar reflection as in \cite{PT} to prove the smoothness up to $\partial{\overline{\Sigma}}$. So, we fix a point $q_{0} \in \partial{\overline{\Sigma}}$. 

Following the construction in \cite[Theorem 6.1]{PT}, in a neighborhood $\mathcal{U}$ of $q_{0}$, we can set
$$
\mathcal{C} \cap \mathcal{U}=\left\{q=\left(q^{\prime}, q_{n}\right) \in \mathcal{U}: q_{n}>g\left(q^{\prime}\right)\right\}$$ and   $$\partial \overline{\mathcal{C}} \cap \mathcal{U}=\left\{q=\left(q^{\prime}, q_{n}\right) \in \mathcal{U}: q_{n}=g\left(q^{\prime}\right)\right\}
$$
for some smooth function $g$. 
Denoting $q_{0}=\left(q_{0}^{\prime}, g\left(q_{0}^{\prime}\right)\right)$,  $\nabla^{\prime}$ the gradient in the $q^{\prime}$-variables and $\nabla'^{2}$ the Hessian in the $q^{\prime}$-variables, we define a map $\psi: \mathcal{U} \rightarrow \mathbb{R}^{n}$ as
$$
\psi(q)=\psi\left(q^{\prime}, q_{n}\right)=\left(q^{\prime}-q_{0}^{\prime}-\frac{g\left(q^{\prime}\right)-q_{n}}{1+\left|\nabla^{\prime} g\left(q^{\prime}\right)\right|^{2}} \nabla^{\prime} g\left(q^{\prime}\right), q_{n}-g\left(q^{\prime}\right)\right) .
$$

At any point $q \in  \mathcal{U}$, by direct computations, we can get the Jacobian $\mathcal{J}\psi$ of $\psi$, where the matrix components are as follows: 
$$\psi^{k}_{l}=\delta^{k}_{l}-\frac{ g_{k}\left(q^{\prime}\right)  g_{l}\left(q^{\prime}\right)}{1+\left|\nabla^{\prime} g\left(q^{\prime}\right)\right|^{2}}+(q_{n}-g(q')) \cdot$$
$$(\frac{g_{kl}(q')}{1+|\nabla' g(q')|^{2}}-\frac{2(\nabla'^{2}g(q')\nabla'g(q'))_{l} g_{k}(q')}{(1+|\nabla' g(q')|^{2})^{2}}),$$
$$\psi^{k}_{n}=\frac{g_{k}\left(q^{\prime}\right)}{1+\left|\nabla^{\prime} g\left(q^{\prime}\right)\right|^{2}}, \quad \psi^{n}_{l}=-g_{l}\left(q^{\prime}\right)\text{ and }\psi^{n}_{n}=1,$$
where $g_{k}(q')$ $(k=1,\ldots, n-1)$ and $g_{k l}(q')$ $(k,l=1,\ldots, n-1)$ are the components of the vector $\nabla' g(q')$ and the matrix $\nabla'^{2}g(q').$

Moreover, it is easy to see that the Jacobian $\mathcal{J}\psi$ is an invertible matrix on  $\partial\overline{\mathcal{C}}\cap \mathcal{U}$, hence we can find a connected neighbourhood $\mathcal{U}_{0} \subseteq \mathcal{U}$ of $q_{0}\in \partial \overline{\mathcal{C}}\cap \mathcal{U}$ such that $\psi|_{\mathcal{U}_0}$ is a diffeomorphism. Let $z:=\psi(q)$ for $q \in  \mathcal{U}_{0}$ and $\tilde{f}:=f \circ \psi^{-1}$ for any smooth function $f$ defined on $\mathcal{U}_0$, then
$$
\frac{-1}{\sqrt{1+\left|\nabla^{\prime} g\left(q^{\prime}\right)\right|^{2}}} \frac{\partial f}{\partial \nu}(q)=\frac{\partial \tilde{f}}{\partial z_{n}}(z) \text { at } q \in \partial \overline{\mathcal{C}} \cap \mathcal{U}_{0} \Longleftrightarrow z \in \psi\left(\mathcal{U}_{0}\right) \cap\left\{z_{n}=0\right\}.
$$

Let us denote
$$
\tilde{\Omega}^{\mathcal{C}}=\psi\left(\Omega^{\mathcal{C}} \cap \mathcal{U}_{0}\right),  M_{0}=\psi\left(\Sigma
\cap \mathcal{U}_{0}\right),  M_{1}=\psi\left(\Gamma \cap \mathcal{U}_{0}\right),  v(z)=u\left(\psi^{-1}(z)\right).
$$
Let $v_{i}$ be the components of the vector $\nabla v$ and $\varphi$ be the inverse matrix of $\mathcal{J}\psi$.

From the equation in the problem (\ref{eqn:1.1}) we get
$$
\begin{aligned}
&0=\divg(|\nabla u|^{p-2}\nabla u)(q)=(|\nabla u|^{p-2}\psi^{i}_{k}v_{i})_k\\
&=|\nabla u|^{p-2}[(\psi^{i}_{k})_{k}v_{i}+\psi_{k}^{i}\psi_{k}^{j}v_{i j}]+(p-2)|\nabla u|^{p-4}\cdot\\
&[v_{l j}\psi_{q}^{l}\psi_{q}^{m}v_{m}\psi^{j}_{k}\psi_{k}^{i}v_{i}+v_{l}(\psi_{q}^{l})_{k}\psi_{q}^{m}v_{m}\psi_{k}^{i}v_{i}].
\end{aligned}
$$

On the other hand, by direct computations, we obtain
$$
\begin{aligned}
&\divg(|\nabla u|^{p-2}(\mathcal{J}\psi)^{T} \mathcal{J}\psi \nabla v)(z)=(|\nabla u|^{p-2}\psi^{i}_{k}\psi_{k}^{j}v_{j})_{i}\\
&=|\nabla u|^{p-2}(\psi^{i}_{k}\psi_{k}^{j}v_{i j}+(\psi^{i}_{k})_{m}\varphi^{m}_{i}\psi_{k}^{j}v_{j}+(\psi_{k}^{j})_{k}v_{j})\\
&+(p-2)|\nabla u|^{p-4}[v_{i l}\psi_{q}^{l}\psi_{q}^{m}v_{m}\psi^{i}_{k}\psi_{k}^{j}v_{j}+v_{l}(\psi_{q}^{l})_{k}\psi_{q}^{m}v_{m}\psi_{k}^{j}v_{j}].
\end{aligned}
$$
Thus,
\begin{equation*}
\begin{aligned}
&\divg(|\nabla u|^{p-2}(\mathcal{J}\psi)^{T}  \mathcal{J}\psi \nabla v)(z)-|\nabla u|^{p-2}(\psi^{i}_{k})_{m}\varphi^{m}_{i}\psi_{k}^{j}v_{j}=0.
\end{aligned}
\end{equation*}
Consequently,
\begin{equation}\label{regmaineq}
\begin{cases}\divg A(z, \nabla v)+\langle B(z, \nabla v),\nabla v\rangle=0 & \text { in } \tilde{\Omega}^{\mathcal{C}}, \\ v=1 & \text { on } M_{0} ,\\ \dfrac{\partial v}{\partial z_{n}}=0 & \text { on } M_{1},\end{cases}
\end{equation}
where $$A:\mathbb{R}^{n}\times \mathbb{R}^{n}\rightarrow \mathbb{R}^{n}$$ $$(z, \eta)\rightarrow|\mathcal{J} \psi ~\eta|^{p-2}(\mathcal{J}\psi)^{T}  \mathcal{J}\psi ~\eta$$ and the components of vector $B$ $$B^{j}:\mathbb{R}^{n}\times \mathbb{R}^{n}\rightarrow \mathbb{R}$$ $$(z, \eta)\rightarrow -|\mathcal{J} \psi ~\eta|^{p-2}(\psi^{i}_{k})_{m}\varphi^{m}_{i}\psi_{k}^{j}.$$ 

Moreover, if we denote $$a^{i j}(z):=\frac{\partial A^{i}}{\partial \eta_{j}}=|\mathcal{J}\psi ~\eta|^{p-2}\psi_{k}^{i}\psi^{j}_{k}
+(p-2)|\mathcal{J}\psi ~\eta|^{p-4}\psi_{q}^{j}\psi_{q}^{m}\eta_{m}\psi^{i}_{k}\psi_{k}^{l}\eta_{l}$$ and
$$b^{i}(z):=|\mathcal{J} \psi ~\eta|^{p-2}(\psi_{k}^{i})_{k}+(p-2)|\mathcal{J} \psi ~\eta|^{p-4}v_{l}(\psi_{q}^{l})_{k}\psi_{q}^{m}v_{m}\psi_{k}^{i},$$
then it's easy to know that the equation in (\ref{regmaineq}) can be rewritten as 
\begin{equation}\label{mideq}
a^{i j}(z)v_{i j}(z)+b^{i}(z)v_{i}(z)=0,
\end{equation} 
and it is a degenerate quasilinear elliptic equation.

Let us define $$
\begin{aligned} 
\omega(z)= \begin{cases}v(z) & \text { if } z \in \tilde{\Omega}^{\mathcal{C}} \cup M_{1}, \\
v\left(z^{\prime},-z_{n}\right) & \text { if }\left(z^{\prime},-z_{n}\right) \in \tilde{\Omega}^{\mathcal{C}} ,\end{cases}
\end{aligned}
$$
and 
$$
\begin{aligned} 
\psi^{i}_{k}\psi^{j}_{k}=&
\begin{cases}
\psi^{i}_{k}\psi^{j}_{k} &\text { if } z \in \tilde{\Omega}^{\mathcal{C}} \cup M_{1}, \\
\psi^{i}_{k}\psi^{j}_{k} &\text { if }\left(z^{\prime},-z_{n}\right) \in \tilde{\Omega}^{\mathcal{C}}\\
\end{cases} \text{with } 1\leq i, j \leq n-1 \text { or }(i, j)=(n, n), \\&
\begin{cases}
\psi^{i}_{k}\psi^{j}_{k} & \text {if } z \in \tilde{\Omega}^{\mathcal{C}} \cup M_{1}, \\
-\psi^{i}_{k}\psi^{j}_{k} & \text {if }\left(z^{\prime},-z_{n}\right) \in \tilde{\Omega}^{\mathcal{C}}
\end{cases} \text {with } i\text { or } j=n \text{ and } (i,j)\neq (n,n), \\
 B^{j}(z)=&\left\{\begin{array}{ll}
B^{j}(z) & \text { if } z \in \tilde{\Omega}^{\mathcal{C}} \cup M_{1}, \\
B^{j}\left(z^{\prime},-z_{n}\right) & \text { if }\left(z^{\prime},-z_{n}\right) \in \tilde{\Omega}^{\mathcal{C}}
\end{array} \text { with } 1\leq j \leq n-1,\right.\\
&\left\{\begin{array}{ll} B^{j}(z) & \text { if } z \in \tilde{\Omega}^{\mathcal{C}}\cup M_{1}, \\
-B^{j}\left(z^{\prime},-z_{n}\right) & \text { if }\left(z^{\prime},-z_{n}\right) \in \tilde{\Omega}^{\mathcal{C}} \end{array} \quad \text { with } j=n , \right.\\
 b^{j}(z)=&\left\{\begin{array}{ll}
b^{j}(z) & \text { if } z \in \tilde{\Omega}^{\mathcal{C}} \cup M_{1}, \\
b^{j}\left(z^{\prime},-z_{n}\right) & \text { if }\left(z^{\prime},-z_{n}\right) \in \tilde{\Omega}^{\mathcal{C}}
\end{array} \text { with } 1\leq j \leq n-1,\right.\\
&\left\{\begin{array}{ll} b^{j}(z) & \text { if } z \in \tilde{\Omega}^{\mathcal{C}}\cup M_{1}, \\
-b^{j}\left(z^{\prime},-z_{n}\right) & \text { if }\left(z^{\prime},-z_{n}\right) \in \tilde{\Omega}^{\mathcal{C}}  \end{array} \quad \text { with } j=n. \right. 
\end{aligned}
$$
Now we can check 
$$
\begin{aligned}
 A^{j}(z,\nabla \omega)=&\left\{\begin{array}{ll}
A^{j}(z) & \text { if } z \in \tilde{\Omega}^{\mathcal{C}} \cup M_{1}, \\
A^{j}\left(z^{\prime},-z_{n}\right) & \text { if }\left(z^{\prime},-z_{n}\right) \in \tilde{\Omega}^{\mathcal{C}}
\end{array} \text { with } 1\leq j \leq n-1,\right.\\
&\left\{\begin{array}{ll} A^{j}(z) & \text { if } z \in \tilde{\Omega}^{\mathcal{C}}\cup M_{1}, \\
-A^{j}\left(z^{\prime},-z_{n}\right) & \text { if }\left(z^{\prime},-z_{n}\right) \in \tilde{\Omega}^{\mathcal{C}}  \end{array} \quad \text { with } j=n, \right.
\end{aligned}
$$
$a^{i j}(z)=$
$$
\begin{aligned}
&\begin{cases}
a^{i j}(z) & \text {if } z \in \tilde{\Omega}^{\mathcal{C}} \cup M_{1}, \\
a^{i j}\left(z^{\prime},-z_{n}\right) & \text {if }\left(z^{\prime},-z_{n}\right) \in \tilde{\Omega}^{\mathcal{C}}\\
\end{cases} \text {with } 1\leq i, j \leq n-1 \text { or }(i, j)=(n, n), \\
&\begin{cases}
a^{i j}(z) & \text { if } z \in \tilde{\Omega}^{\mathcal{C}} \cup M_{1}, \\
-a^{i j}\left(z^{\prime},-z_{n}\right) & \text { if }\left(z^{\prime},-z_{n}\right) \in \tilde{\Omega}^{\mathcal{C}}
\end{cases} \text {with } i \text { or } j=n \text{ and } (i,j)\neq (n,n),  \\
\end{aligned}
$$
and the equation (\ref{mideq}) can be extended to $a^{i j}(z)\omega_{i j}(z)+b^{i}(z)\omega_{i}(z)=0$.

Furthermore, the following inequalities hold:
$$
\begin{aligned}
 a^{i j}&\xi_{i}\xi_{j}\geq \lambda|\eta|^{p-2}|\xi|^{2},\\
&|a^{i j}|\leq \Lambda|p|^{p-2},\\
|A(z, \eta)&-A(z_{1}, \eta)|\leq \Lambda(1+|\eta|)^{p-1}|z-z_{1}|^{\alpha},\\
|B(z,\eta)|\leq\Lambda(1+ |\eta|)^{p} \text{ for all }&(z, \eta)\in\partial\Omega_{ref}^{\mathcal{C}}\times \mathbb{R}^{n} \text{ and all }z_{1}\in \Omega_{ref}^{\mathcal{C}},\xi\in\mathbb{R}^{n},
\end{aligned}
$$
where $$\Omega^{\mathcal{C}}_{r e f}=\left\{z=\left(z^{\prime}, z_{n}\right): z \in \tilde{\Omega}^{\mathcal{C}} \cup M_{1} \text { or }\left(z^{\prime},-z_{n}\right) \in \tilde{\Omega}^{\mathcal{C}}\right\}$$ and constants $\lambda\leq \Lambda.$

Consequently, the boundary $\partial \Omega^{\mathcal{C}}_{r e f}$ is $C^{2,\alpha}$-smooth and the function $\omega$ is a solution to the divergence structure equation
$$
    \divg A(z, \nabla \omega)+\langle B(z, \nabla \omega),\nabla \omega\rangle=0  \text { in } {\Omega}^{\mathcal{C}}_{ref}, 
$$in the distribution sense.
Therefore, due to $0<u\leq 1$, we can deduce from \cite[Theorem 1]{Lie} that $\omega \in C^{1, \alpha}(\overline{{\Omega}^{\mathcal{C}}_{ref}})$.
\end{proof}

\begin{remark}
    1.When $\mathcal{C}=\mathbb{R}^{n}$, we can directly apply the classic regularity result in \cite{Lie} to obtain this regularity result. 
    
   2. For the case $p=2$, since the equation is strictly elliptic, we can also use the $L^{p}$-theory to obtain the result as in \cite{PT}.  

   3. As shown in the introduction, for the overdetermined problem (\ref{eqn:1.1}) and (\ref{Overdeterminedcondition}), the assumption of orthogonal intersection is equivalent to the assumption of regularity.
\end{remark}

\quad

For $1<p<n$, the fundamental solution to $\Delta_{p} u=0$ in $\mathbb{R}^{n}\backslash \{O\}$ is given by
$$\Gamma_{p}(x):=|x|^{\frac{p-n}{p-1}}.$$ It is easy to prove that $\Delta_{p} \Gamma_{p}=\delta_{0}$ in $\mathbb{R}^{n},$ where $\delta_{0}$ is the Dirac function about the origin.

Now, let us investigate the asymptotic behavior of the solution to the problem ($\ref{eqn:1.1}$). First, we introduce the following Hopf-type lemma from \cite[Proposition 3.3.1.]{Tolk} and derive a corresponding strong maximum principle.

\begin{lemma}\label{hopf}
Assume that the functions $u_{1}\in C^{1}(\overline{\mathcal{C}}\backslash{\Omega})$, $u_{2}\in C^{2}(\overline{\mathcal{C}}\backslash{\Omega})$ are solutions to the following problem 
 \begin{equation}\label{eqn:3.2}
\left\{
\begin{aligned}
&\Delta_{p}u=0\ \ {\rm in}\ \ \Omega^{\mathcal{C}},\\
&\langle\nabla u(x), \nu_{\Gamma}\rangle=0\ \ {\rm on} \ \ \Gamma.
\end{aligned}\right.
\end{equation}

Let $B\subset {\Omega}^{\mathcal{C}}$ be a ball and we further assume that the solutions $u_{1}, u_{2}$ satisfy 
\begin{equation}\label{hopfcondition}
\begin{aligned}
    & u_{1}>u_{2} \text{ in } B,\\
   &u_{1}(x_{0})=u_{2}(x_{0})\text{ for some }x_{0}\in\partial B.
\end{aligned}
\end{equation}
Additionally, we suppose that $u_{2}$ satisfies 
\begin{equation}\label{u2}
    |\nabla u_{2}|\geq \delta \text{ in } B,
\end{equation} for some positive constant $\delta$.

Then \begin{equation}\label{C1est}
    u_{1 \nu}(x_{0})\neq  u_{2 \nu}(x_{0}).
\end{equation}
\end{lemma}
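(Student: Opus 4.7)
The plan is to execute a standard Hopf-type barrier argument that exploits the hypothesis $|\nabla u_2|\geq \delta>0$ in $B$: this assumption makes the $p$-Laplace operator, when linearized at $u_2$, uniformly elliptic with bounded coefficients. Since $B\subset \Omega^{\mathcal{C}}$ is an interior ball, the Neumann condition on $\Gamma$ plays no role here.

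First I would localize: choose an interior tangent ball $B' = B_R(y_0) \subset B$ with $\overline{B'} \cap \partial B = \{x_0\}$; the outer unit normals to $\partial B'$ and $\partial B$ at $x_0$ coincide, so it suffices to prove the strict inequality $u_{1\nu}(x_0) < u_{2\nu}(x_0)$ along the outer normal to $\partial B'$. Working in the closed annulus $A = \overline{B'} \setminus B_{R/2}(y_0)$, I would construct a strict $p$-subsolution that lies below $u_1$ on $\partial A$ and touches $u_2$ at $x_0$.

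As the barrier take the classical exponential function
\begin{equation*}
\phi(x) = e^{-\alpha|x-y_0|^2} - e^{-\alpha R^2},
\end{equation*}
which is positive on $B'$, vanishes on $\partial B'$, and for $\alpha$ large satisfies $L_{u_2}[\phi] > 0$ on $A$, where $L_{u_2}[\phi] := \partial_i(a_{ij,p}(\nabla u_2)\, \partial_j \phi)$ is the linearization of $\Delta_p$ at $u_2$. The key point is that $|\nabla u_2| \geq \delta$ makes $a_{ij,p}(\nabla u_2)$ from \eqref{p-harmonic} uniformly elliptic, with eigenvalues between $\min(1,p-1)\delta^{p-2}$ and $\max(1,p-1)(\sup_{B'}|\nabla u_2|)^{p-2}$, while $u_2 \in C^2$ keeps the first-order coefficients bounded; the second-order contribution $4\alpha^2 a_{ij,p}(\nabla u_2)(x-y_0)_i(x-y_0)_j e^{-\alpha|x-y_0|^2}$ then dominates all other terms on $A$ for $\alpha$ sufficiently large.

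Now set $v_\epsilon = u_2 + \epsilon \phi$ for $\epsilon > 0$. Since $|\nabla u_2 + t\epsilon \nabla \phi| \geq \delta/2$ on $\overline{A}$ for $\epsilon$ small, a Taylor expansion of $\Delta_p$ along the segment $u_2 + t\epsilon\phi$, $t\in[0,1]$, yields
\begin{equation*}
\Delta_p v_\epsilon = \Delta_p u_2 + \epsilon L_{u_2}[\phi] + O(\epsilon^2) = \epsilon L_{u_2}[\phi] + O(\epsilon^2),
\end{equation*}
so $\Delta_p v_\epsilon > 0$ on $A$ for $\epsilon$ small enough. On the inner sphere $\partial B_{R/2}(y_0) \subset B$, compactness and $u_1 > u_2$ yield a gap $u_1 - u_2 \geq m > 0$; taking $\epsilon < m/\sup_{B'}\phi$ gives $u_1 > v_\epsilon$ there. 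On the outer sphere $\partial B'$, $\phi = 0$, so $v_\epsilon = u_2 \leq u_1$ by continuity. The weak comparison principle for the $p$-Laplacian, applied to the strict subsolution $v_\epsilon$ and the solution $u_1$ on $A$, then gives $u_1 \geq v_\epsilon$ on $A$. Since equality holds at $x_0$, differentiating in the outer normal direction $\nu$ yields
\begin{equation*}
u_{1\nu}(x_0) \leq u_{2\nu}(x_0) + \epsilon \phi_\nu(x_0) = u_{2\nu}(x_0) - 2\alpha R\, e^{-\alpha R^2}\epsilon < u_{2\nu}(x_0),
\end{equation*}
which proves $u_{1\nu}(x_0) \neq u_{2\nu}(x_0)$.

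The main technical step is the Taylor expansion of $\Delta_p v_\epsilon$ around $u_2$: both the non-degeneracy $|\nabla u_2 + t\epsilon \nabla \phi| > 0$ for $t \in [0,1]$ and the uniform control of the $O(\epsilon^2)$ remainder rest crucially on $|\nabla u_2| \geq \delta$. The remaining ingredients---uniform ellipticity of the linearization and the weak comparison principle for $p$-harmonic functions---are standard.
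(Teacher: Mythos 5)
Your proof is correct and follows essentially the same route as the paper, which does not prove this lemma itself but cites Tolksdorf's Proposition 3.3.1 and notes that the original barrier argument goes through verbatim when the nondegeneracy $|\nabla u_2|\geq\delta$ is only assumed in $B$ (so that $\Gamma_p$ qualifies as $u_2$). Your exponential barrier, the Taylor expansion of $\Delta_p(u_2+\epsilon\phi)$ made uniform by $|\nabla u_2|\geq\delta$, and the weak comparison principle on the tangent annulus are exactly the ingredients of that standard Hopf-type argument; the only cosmetic slip is that for $p<2$ the roles of $\delta^{p-2}$ and $(\sup|\nabla u_2|)^{p-2}$ in your ellipticity bounds are interchanged, which does not affect the argument.
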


\begin{remark}
First, note that the point $x_{0}$ can be on  $\Gamma\cup\Sigma$ and the function $u_{2}$ will be chosen as a constant multiple of the fundamental solution $\Gamma_{p}$ in later applications.

Next, instead of assuming \eqref{u2} in the entire $\Omega^{\mathcal{C}}$ as in \cite{Tolk}, we only require it to hold in $B$, ensuring that  $\Gamma_p$ satisfies this condition without changing the original proof. 
\end{remark}

\quad

With the assumption that ${\Omega}^{\mathcal{C}}$ is connected, we can get the following result: 

\begin{corollary}\label{smp}
Suppose that the functions $u_{1}\in C^{1}(\overline{\mathcal{C}}\backslash{\Omega})$, $u_{2}\in C^{2}(\overline{\mathcal{C}}\backslash{\Omega})$ satisfy (\ref{eqn:3.2}), the inequality in (\ref{u2}) for any ball $B\subset \overline{\mathcal{C}}\backslash \Omega$ and $$u_{1}\geq u_{2} \text{ and } u_{1} \not\equiv u_{2} \text{ in } {\Omega}^{\mathcal{C}}\cup \Gamma.$$
Then $u_{1}>u_{2}$ in ${\Omega}^{\mathcal{C}}\cup \Gamma.$
\end{corollary}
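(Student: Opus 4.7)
The plan is to argue by contradiction and reduce the claim to the Hopf-type Lemma~\ref{hopf}. Set $w := u_1 - u_2 \geq 0$ and define
\[ N := \{x \in \Omega^{\mathcal{C}} \cup \Gamma : w(x) > 0\}, \qquad Z := \{x \in \Omega^{\mathcal{C}} \cup \Gamma : w(x) = 0\}. \]
If the conclusion fails then both $N$ and $Z$ are nonempty: $N$ because $u_1 \not\equiv u_2$, $Z$ by the contradiction hypothesis. In the relative topology of $\Omega^{\mathcal{C}} \cup \Gamma$---which is connected, inherited from the connectedness of $\Omega^{\mathcal{C}}$ to which $\Gamma \subset \partial \Omega^{\mathcal{C}}$ is attached---the set $N$ is open while $Z$ is closed, so $Z \cap \overline{N} \neq \emptyset$.

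The next step is to set up Lemma~\ref{hopf} by producing an open ball $B = B(y, r^*) \subset N \cap \Omega^{\mathcal{C}}$ whose boundary meets $Z$. Pick $y \in N \cap \Omega^{\mathcal{C}}$ close to some point of $Z \cap \overline{N}$ (possible since $w$ is continuous and $\Omega^{\mathcal{C}}$ is dense in $\Omega^{\mathcal{C}} \cup \Gamma$), and let $r^* := \sup\{r > 0 : B(y, r) \subset N \cap \Omega^{\mathcal{C}}\} \in (0, \infty)$. The boundary $\partial B(y, r^*)$ must meet either $Z$ or $\partial \Omega^{\mathcal{C}} \setminus \Gamma$; by taking $y$ sufficiently close to $Z \cap \overline{N}$ the second alternative is ruled out, and one obtains a contact point $x_0 \in \partial B(y, r^*) \cap Z$ lying in $\Omega^{\mathcal{C}} \cup \Gamma$. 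By construction $u_1 > u_2$ in $B$, $u_1(x_0) = u_2(x_0)$, and $|\nabla u_2| \geq \delta$ in $B$ by the standing hypothesis, so Lemma~\ref{hopf} yields $u_{1\nu}(x_0) \neq u_{2\nu}(x_0)$, where $\nu$ is the outward unit normal to $B$ at $x_0$.

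To reach a contradiction I would show that $\nabla w(x_0) = 0$, which would force $u_{1\nu}(x_0) = u_{2\nu}(x_0)$ for every direction $\nu$. If $x_0 \in \Omega^{\mathcal{C}}$ this is immediate since $x_0$ is an interior minimum of $w$. If $x_0 \in \Gamma$, the Neumann conditions $\langle \nabla u_i, \nu_\Gamma \rangle = 0$ on $\Gamma$ imply that the $\nu_\Gamma$-component of $\nabla w(x_0)$ vanishes; in addition, since $w \geq 0 = w(x_0)$ on $\Gamma$ near $x_0$, the restriction $w|_\Gamma$ has a local minimum at $x_0$, and hence its tangential gradient along $\Gamma$ also vanishes. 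Combining the two, $\nabla w(x_0) = 0$, contradicting the strict Hopf inequality.

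The main obstacle is the ball-construction step when the contact point lies on $\Gamma$: one needs to verify that the maximal ball remains in $\Omega^{\mathcal{C}}$ and that its first touch with the obstruction set is a zero of $w$ rather than a point of $\Sigma$ or of any singular portion of the cone boundary. This is dealt with by restricting $y$ to a neighborhood of $Z \cap \overline{N}$ on which the distance to $\Sigma$ is bounded away from zero, and by invoking the assumed smoothness of $\partial \overline{\mathcal{C}} \setminus \{O\}$, which supplies a local interior-ball property for $\Gamma$.
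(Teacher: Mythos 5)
Your overall strategy---produce a contact ball for Lemma~\ref{hopf} and then contradict the strict inequality $u_{1\nu}(x_0)\neq u_{2\nu}(x_0)$ by showing $\nabla(u_1-u_2)(x_0)=0$ (interior minimum in the interior case; tangential minimum plus the two Neumann conditions when $x_0\in\Gamma$)---is exactly the paper's, and that final step is correct. The gap is in the ball construction. The dichotomy ``$\partial B(y,r^*)$ must meet either $Z$ or $\partial\Omega^{\mathcal C}\setminus\Gamma$'' is false: since $\Gamma\subset\partial\Omega^{\mathcal C}$ is disjoint from $\Omega^{\mathcal C}$, the maximal ball $B(y,r^*)\subset N\cap\Omega^{\mathcal C}$ can first touch $\Gamma$ at a point where $u_1>u_2$, and at such a point no contradiction is available. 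This is not a fringe scenario; it is precisely what happens when $Z\subset\Gamma$, i.e.\ $u_1>u_2$ throughout $\Omega^{\mathcal C}$ but equality holds somewhere on $\Gamma$. In that case $N\cap\Omega^{\mathcal C}=\Omega^{\mathcal C}$, so $r^*=\operatorname{dist}(y,\partial\Omega^{\mathcal C})$ and the touch point is the foot of the perpendicular from $y$ to $\Gamma$, which need not lie in $Z$ no matter how close $y$ is taken to $z_0\in Z\cap\overline N$. Your closing remark invokes the interior-ball property of $\Gamma$, which is the right tool, but it does not repair this construction: you are prescribing the center $y$, whereas what must be prescribed is the contact point. (Your list of obstructions---$\Sigma$ and singular portions of the cone boundary---also misses the actual problem, which is $\Gamma\cap N$.)

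The repair is the two-case split the paper uses. First rule out interior zeros: if $Z\cap\Omega^{\mathcal C}\neq\emptyset$, choose $x^*\in N\cap\Omega^{\mathcal C}$ with $\operatorname{dist}(x^*,Z)<\operatorname{dist}(x^*,\partial\Omega^{\mathcal C})$ (possible near a relative boundary point of $Z\cap\Omega^{\mathcal C}$ inside $\Omega^{\mathcal C}$); then the maximal ball centered at $x^*$ is forced to touch $Z$ at an interior point before it can reach $\partial\Omega^{\mathcal C}$, and Lemma~\ref{hopf} gives the contradiction there. Once $Z\cap\Omega^{\mathcal C}=\emptyset$, one has $u_1>u_2$ on all of $\Omega^{\mathcal C}$, and for $z_0\in Z\cap\Gamma$ one takes the ball internally tangent to $\Gamma$ at $z_0$ itself, supplied by the smoothness of $\partial\overline{\mathcal C}\setminus\{O\}$; this ball automatically lies in $\{u_1>u_2\}$ and has $z_0$ as its contact point, so your $\Gamma$-case computation applies verbatim. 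With this modification the argument closes and coincides with the paper's proof.
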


\begin{proof}
    We consider $$E:=\{x\in{\Omega}^{\mathcal{C}}\cup\Gamma: u_{1}(x)=u_{2}(x)\}.$$
From the assumption, we know $E\neq {\Omega}^{\mathcal{C}}\cup\Gamma.$ We claim that $E=\emptyset$.

Otherwise, for the case where $E\cap {\Omega}^{\mathcal{C}}\neq \emptyset$,  there exists a point $x_{1}\in{\Omega}^{\mathcal{C}}$ such that $$u_{1}(x_{1})=u_{2}(x_{1}).$$ 
Then, from the continuity of the solutions $u_{1}, u_{2},$ the set $$E':=\{x\in {\Omega}^{\mathcal{C}}:u_{1}(x)=u_{2}(x)\}\neq \emptyset$$ is a relatively closed subset in ${\Omega}^{\mathcal{C}}$.
    So we can choose a point $x_{0}\in E'$ satisfying the conditions (\ref{hopfcondition}) in Lemma \ref{hopf}. 
In fact, the set ${\Omega}^{\mathcal{C}}\backslash E'$ is a relatively
5open subset in ${\Omega}^{\mathcal{C}}$. There exists one point $x^{*}\in {\Omega}^{\mathcal{C}}\backslash E'$  such that $dist(x^{*}, \partial {\Omega}^{\mathcal{C}})>dist(x^{*},  E')$ with respect to the standard distance function of $\mathbb{R}^{n}$. We take a ball $B_{r}(x^{*})$ with $r=dist(x^{*}, E').$ Then we find the point $x_{0}\in \partial B_{r}(x^{*})\cap E'.$  This contradicts the conclusion \eqref{C1est} in Lemma \ref{hopf}.
    Thus, we obtain  $E\cap {\Omega}^{\mathcal{C}}=\emptyset$. 

For the case where $E\cap \Gamma\neq \emptyset$,  there exists a point $x_{2}\in \Gamma$ such that $u_{1}(x_{2})=u_{2}(x_{2}).$ Since the boundary $\Gamma$ is smooth, we can find an interior ball $B\subset{\Omega}^{\mathcal{C}}$ such that  $u_{1}>u_{2} \text{ in } B$ and
   $u_{1}(x_{2})=u_{2}(x_{2})\text{ with }x_{2}\in\partial B\cap \Gamma.$
So Lemma \ref{hopf} implies $$u_{1\nu_{\Gamma}}<u_{2\nu_{\Gamma}},$$ which contradicts the condition $\langle\nabla (u_{1}-u_{2}), \nu_{\Gamma}\rangle=0$ in (\ref{eqn:3.2}).
\end{proof}

\begin{remark}\label{boundeddomain}
    This corollary also holds for the connected subset of $\overline{\mathcal{C}}\backslash \Omega$.
\end{remark}

Now we use the technique introduced in \cite{KV} to obtain the asymptotic behavior of the solution in convex cone $\mathcal{C}$, which is an extension of \cite[Lemma 2.15.]{CSZ}.
\begin{theorem}[Asymptotic behavior]\label{Asymptotic expansion}

Let $\Omega\subset\mathbb{R}^n$ be a bounded domain with a boundary of class ${C}^{2,\alpha}$. Let $u$ be a
 solution of the problem (\ref{eqn:1.1}) in ${\Omega}^{\mathcal{C}}$. Then there exists a positive constant $\gamma$ such that $u$ satisfies
\begin{flalign}\label{asy1}
   &\ (\romannumeral1) \lim\limits_{|x|\rightarrow+\infty}\dfrac{u(x)}{\Gamma_{p}(x)}={\gamma},&
\end{flalign}
\begin{flalign}\label{asy2}
  &\  (\romannumeral2) \nabla u(x)=\gamma\nabla \Gamma_{p}(x)+o(|x|^{-\frac{n-1}{p-1}}), \text{ as } |x|\rightarrow +\infty.&
\end{flalign}

\end{theorem}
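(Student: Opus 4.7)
My plan is to adapt the Kichenassamy--V\'{e}ron rescaling method to the convex-cone setting with Neumann data on $\Gamma$. The three pillars are: (a) sandwich $u$ between multiples of the radial fundamental solution $\Gamma_{p}$; (b) pass to a blow-down limit $v$ in $C^{1,\alpha}_{\mathrm{loc}}$; (c) identify $v$ as a constant multiple of $\Gamma_{p}$ via Corollary~\ref{smp}.

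For the sandwich, since $\Gamma_{p}$ is radial it is $p$-harmonic on $\mathbb{R}^{n}\setminus\{O\}$ and automatically satisfies the Neumann condition on $\partial\overline{\mathcal{C}}\setminus\{O\}$. Choosing $0<c_{1}<\min_{\overline{\Sigma}}u/\Gamma_{p}$ and $c_{2}>\max_{\overline{\Sigma}}u/\Gamma_{p}$, I compare $u$ with $c_{i}\Gamma_{p}+\varepsilon$ on annular regions $\mathcal{C}\cap(B_{R}\setminus\overline{\Omega})$ via Lemma~\ref{comparison} (using that $\Delta_{p}$ is invariant under additive constants), then send $R\to\infty$ and $\varepsilon\to 0$ to obtain $c_{1}\Gamma_{p}\leq u\leq c_{2}\Gamma_{p}$ for $|x|\geq R_{0}$. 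The same comparison, now applied with the barrier $\Phi(\rho_{0})\Gamma_{p}+\varepsilon$ on $\mathcal{C}\cap(B_{R}\setminus\overline{B_{\rho_{0}}})$, where $\Phi(\rho):=\sup_{\mathcal{C}\cap\partial B_{\rho}}u/\Gamma_{p}$, shows that $\Phi$ is non-increasing; symmetrically $\Psi(\rho):=\inf_{\mathcal{C}\cap\partial B_{\rho}}u/\Gamma_{p}$ is non-decreasing. Hence the limits
\[
\gamma^{+}:=\lim_{\rho\to\infty}\Phi(\rho),\qquad \gamma^{-}:=\lim_{\rho\to\infty}\Psi(\rho)
\]
exist in $[c_{1},c_{2}]$ with $\gamma^{+}\geq\gamma^{-}$.

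For the blow-down, set $u_{\lambda}(x):=\lambda^{(n-p)/(p-1)}u(\lambda x)$. Scale invariance of $\Delta_{p}$ together with the DiBenedetto--Lieberman $C^{1,\alpha}$ estimates (used in Theorem~\ref{regularity}) makes $\{u_{\lambda}\}_{\lambda\geq 1}$ precompact in $C^{1,\alpha}_{\mathrm{loc}}(\overline{\mathcal{C}}\setminus\{O\})$. Any sequence $\lambda_{k}\to\infty$ admits a subsequence with $u_{\lambda_{k}}\to v$, where $v$ is $p$-harmonic on $\mathcal{C}$, satisfies the Neumann condition on $\partial\overline{\mathcal{C}}\setminus\{O\}$, and obeys $c_{1}\Gamma_{p}\leq v\leq c_{2}\Gamma_{p}$. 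Choosing near-maximizers $y_{k}\in\mathcal{C}\cap\partial B_{\lambda_{k}}$ with $u(y_{k})/\Gamma_{p}(y_{k})\to\gamma^{+}$ and extracting $y_{k}/\lambda_{k}\to\xi\in\overline{\mathcal{C}}\cap S^{n-1}$, the $C^{1}_{\mathrm{loc}}$ convergence yields $v(\xi)=\gamma^{+}\Gamma_{p}(\xi)$; meanwhile the Step-1 envelope rescales to $v\leq\gamma^{+}\Gamma_{p}$ on $\mathcal{C}\setminus\{O\}$. To identify $v$, I apply Corollary~\ref{smp} with $u_{1}=-v$ and $u_{2}=-\gamma^{+}\Gamma_{p}$: both are $p$-harmonic (negation preserves $p$-harmonicity), $u_{1}\geq u_{2}$, equality holds at $\xi$, and the nondegeneracy $|\nabla u_{2}|=\gamma^{+}|\nabla\Gamma_{p}|\geq\delta>0$ holds on any ball in $\overline{\mathcal{C}}\setminus\{O\}$. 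Hence $v\equiv\gamma^{+}\Gamma_{p}$; the symmetric argument with $\gamma^{-}$ gives $v\equiv\gamma^{-}\Gamma_{p}$, so $\gamma^{+}=\gamma^{-}=:\gamma>0$. Since every subsequential limit equals $\gamma\Gamma_{p}$, the full family satisfies $u_{\lambda}\to\gamma\Gamma_{p}$ in $C^{1,\alpha}_{\mathrm{loc}}(\overline{\mathcal{C}}\setminus\{O\})$; specializing at fixed $x$ gives (\ref{asy1}), and rescaling the gradient convergence back yields (\ref{asy2}) because $|\nabla\Gamma_{p}(y)|\asymp|y|^{-(n-1)/(p-1)}$.

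The main obstacle is Step~(c): the nonlinearity of $\Delta_{p}$ prevents applying a strong maximum principle to the difference $\gamma^{+}\Gamma_{p}-v$ directly, so one must route through Corollary~\ref{smp}, which requires nondegeneracy of the ``lower'' function. The clean workaround is to negate and put the nondegenerate $-\gamma^{\pm}\Gamma_{p}$ in the role of $u_{2}$, as above. A secondary technical point is establishing the monotonicity of the envelopes $\Phi$ and $\Psi$: the outer boundary of the comparison annulus has to be handled by the small shift $+\varepsilon$ combined with the decay $\Gamma_{p}\to 0$ at infinity, so that Lemma~\ref{comparison} applies on each bounded annulus before one lets $R\to\infty$ and $\varepsilon\to 0$.
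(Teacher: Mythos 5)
Your proposal is correct and follows essentially the same route as the paper: the sandwich between constant multiples of $\Gamma_{p}$ via Lemma \ref{comparison}, scaling-invariant $C^{1,\alpha}$ estimates giving compactness of the blow-down family $u_{\lambda}$, and identification of every subsequential limit as $\gamma\Gamma_{p}$ through Corollary \ref{smp}. The only differences are organizational: you extract the limit of $u/\Gamma_{p}$ on spheres from the monotonicity of the envelopes $\Phi,\Psi$ rather than the paper's three-case analysis of the $\limsup$, and you make explicit the negation needed so that the nondegenerate barrier $\gamma^{\pm}\Gamma_{p}$ occupies the role of $u_{2}$ in Corollary \ref{smp}, a point the paper leaves implicit.
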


\begin{proof}

\textbf{Step 1:} There exist $C^{0}$ estimates of the function $\dfrac{u(x)}{\Gamma_{p}(x)}$.

We apply the comparison theorem to the following problems: for any $\epsilon>0$,
\begin{equation}\label{eqn:1.2}
\left\{
\begin{aligned}
&\Delta_{p}(u+\epsilon)=\Delta_{p} U_{1}=0\ \ {\rm in}\ \ {\Omega}^{\mathcal{C}},\\
&u+\epsilon\geq U_{1} \ \ {\rm on} \ \ \Sigma,\\
&u+\epsilon\geq U_{1}\ \ {\rm as}\ \ |x|\rightarrow+\infty,\\
&\langle\nabla (u+\epsilon), \nu_{\Gamma}\rangle=\langle\nabla U_{1}, \nu_{\Gamma}\rangle=0\ \ {\rm on} \ \ \Gamma,
\end{aligned}\right.
\end{equation}
and
 \begin{equation}\label{eqn:1.3}
\left\{
\begin{aligned}
&\Delta_{p}u=\Delta_{p} (U_{2}+\epsilon)=0\ \ {\rm in}\ \ {\Omega}^{\mathcal{C}},\\
&u\leq U_{2}+\epsilon\ \ {\rm on} \ \ \Sigma,\\
&u\leq U_{2}+\epsilon\ \ {\rm as}\ \ |x|\rightarrow+\infty,\\
&\langle\nabla u, \nu_{\Gamma}\rangle=\langle\nabla (U_{2}+\epsilon), \nu_{\Gamma}\rangle=0\ \ {\rm on} \ \ \Gamma,
\end{aligned}\right.
\end{equation}
where the functions $U_1, U_2: \overline{\mathcal{C}}\backslash \{O\} \rightarrow \mathbb{R}$ are defined by $U_{1}(x):=R_{1}^{\frac{n-p}{p-1}} \Gamma_{p}(x)$, $U_{2} (x):=R_{2}^{\frac{n-p}{p-1}} \Gamma_{p}(x)$   and the constants $0<R_{1}<R_{2}$ satisfy
$$
R_{1}:=\sup \{r>0: B_{r} \subset \Omega\}, \quad R_{2}:=\inf \{r>0: \Omega\subset B_{r}\}.
$$

Then letting $\epsilon\rightarrow0$, we obtain the following inequality:
\begin{equation}\label{3.4est1}
R_{1}^{\frac{n-p}{p-1}} \leq \dfrac{u(x)}{\Gamma_{p}(x)} \leq R_{2}^{\frac{n-p}{p-1}}
\end{equation}
for any $x\in{\overline{\mathcal{C}}}$ such that $|x| \geq R_{2}$.

\textbf{Step 2:} There exists a $C^{1,\beta}$ estimate of the solution $u$.

We define $$
V_{R_{0}}(y):=u(R_{0} y) R_{0}^{\frac{n-p}{p-1}},
$$ for any constant $R_{0}>4 R_{2}$ and $y\in D:=\left\{y \in {\overline{\mathcal{C}}}: \frac{1}{4}<|y|<4\right\}$. It can be shown that $V_{R_{0}}$ is a $p$-harmonic function in $D$.
Moreover, from (\ref{3.4est1}) we have that $V_{R_{0}}$ is bounded in $D$ by a constant depending on $n,p$ and $\Omega$. 

Therefore, we can apply Theorem $2$ in \cite{Lie} to obtain the estimates: 
\begin{equation*}
    |\nabla V_{R_{0}}(y)| \leq C_{1}, \quad\left|\nabla V_{R_{0}}(y)-\nabla V_{R_{0}}(y^{\prime})\right| \leq C_{2}\left|y-y^{\prime}\right|^{\beta},
\end{equation*}
where $y,y^{\prime} 
\in \Tilde{D}:=\{y\in \overline{\mathcal{C}}: \dfrac{1}{2} \leq|y| \leq 2\}$ and  the positive constants $C_{1},C_{2}$ and $\beta \in(0,1)$ are independent of $R_{0}$.

Hence, we have 
\begin{equation}\label{C1}
    R_{0}^{\frac{n-p}{p-1}} R_{0}|\nabla u(x)| \leq C_{1}
   \text{ and }
R_{0}^{\frac{n-p}{p-1}} R_{0}\left|\nabla u(x)-\nabla u(x^{\prime})\right| \leq C_{2} \frac{\left|x-x^{\prime}\right|^{\beta}}{R_{0}^{\beta}},
\end{equation}
$\text{ where }x, x^{\prime}\in\overline{\mathcal{C}} \text{ satisfy }\frac{1}{2} R_{0} \leq|x|,\left|x^{\prime}\right| \leq 2 R_{0}.$

Since the constant $R_{0}$ is arbitrary, the estimate (\ref{C1}) implies that there exists $\tilde{C}_{1}>0$ independent of $R_{0}$ such that
\begin{equation}\label{C1asy}
|x||\nabla u(x)| \leq \tilde{C}_{1} \Gamma_{p}(x)
\text{ and }
\left|\nabla u(x)-\nabla u(x^{\prime})\right| \leq \tilde{C}_{2} \frac{\Gamma_{p}(x)}{|x|^{\beta+1}}\left|x-x^{\prime}\right|^{\beta},
\end{equation}
where $x, x^{\prime}\in\overline{\mathcal{C}}$ satisfy $|x|,|x'| > R:=2R_{2}.$

\textbf{Step 3:} We obtain the asymptotic behaviors (\ref{asy1}) and (\ref{asy2}).

Now we consider
$$
\gamma:=\limsup _{|x| \rightarrow +\infty} \frac{u(x)}{\Gamma_{p}(x)}.
$$

We claim that  
    \begin{equation}\label{lim}
    \gamma=\lim _{R \rightarrow +\infty}\left(\sup _{\{x\in \overline{\mathcal{C}}:|x|=R \}} \frac{u(x)}{\Gamma_{p}(x)}\right). \end{equation}

    Let's prove in three cases: 
    
For the case (i) $$\gamma>\sup_{\{x\in \overline{\mathcal{C}}:|x|=R_{0} \}} \frac{u(x)}{\Gamma_{p}(x)},$$
we consider a function 
$$G(\tilde{R}_{0}):= \sup\limits_{{\{x\in \overline{\mathcal{C}}:R_{0} \leq |x|\leq \tilde{R}_{0}\}} } \frac{u(x)}{\Gamma_{p}(x)}$$ for any $\tilde{R}_{0}\geq R_{0}.$
Then $$G(\tilde{R}_{0})> \sup\limits_{\{x\in \overline{\mathcal{C}}:|x|= R_{0} \}}\frac{u(x)}{\Gamma_{p}(x)}$$
for any sufficiently large $\tilde{R}_{0}$. So by applying Corollary \ref{smp} and Remark \ref{boundeddomain}, we can get $$G(\tilde{R}_{0})=\sup\limits_{\{x\in \overline{\mathcal{C}}:|x|= \tilde{R}_{0} \}} \frac{u(x)}{\Gamma_{p}(x)}.$$ Since $G(\tilde{R}_{0})$ is an increasing function, by (\ref{3.4est1}), we can deduce (\ref{lim}).

For the case (ii) $$\gamma=\sup_{\{x\in \overline{\mathcal{C}}:|x|=R_{0} \}} \frac{u(x)}{\Gamma_{p}(x)},$$
if there exists a point $x_{0}\in \overline{\mathcal{C}}$ with $|x_{0}|>R_{0}$ such that $$\sup _{\{x\in \overline{\mathcal{C}}:|x|\geq R_{0}\}} \frac{u(x)}{\Gamma_{p}(x)}=\dfrac{u(x_{0})}{\Gamma_{p}(x_{0})},
$$then by Corollary \ref{smp} and Remark \ref{boundeddomain}, $\dfrac{u(x)}{\Gamma_{p}(x)}=$constant in $\{x\in\overline{\mathcal{C}}:|x|\geq R_{0}\}$. The claim still holds. 

Otherwise, we can find that $$\gamma=\sup\limits_{\{x\in \overline{\mathcal{C}}:|x|\geq R_{0} \}}\frac{u(x)}{\Gamma_{p}(x)}> \sup _{\{x\in \overline{\mathcal{C}}:|x|=R'_{0} \}} \frac{u(x)}{\Gamma_{p}(x)}$$ for a fixed constant $R'_{0}>R_{0}$. Then we also consider a function $$G_{1}(\tilde{R}_{0}):= \sup\limits_{{\{x\in \overline{\mathcal{C}}: R'_{0}\leq |x|\leq \tilde{R}_{0}\}} } \frac{u(x)}{\Gamma_{p}(x)}$$ for any sufficiently large $\tilde{R}_{0}\geq R'_{0}$ and the remaining proof is the same as in case (i).

For the case (iii) $$\gamma<\sup_{\{x\in \overline{\mathcal{C}}:|x|=R_{0} \}} \frac{u(x)}{\Gamma_{p}(x)},$$
we first consider a function
$$G_{2}(\tilde{R}_{0})=\sup\limits_{\{x\in \overline{\mathcal{C}}:|x|\geq \tilde{R}_{0} \}}\frac{u(x)}{\Gamma_{p}(x)}$$ for any $\tilde{R}_{0}\geq R_{0}.$

If $G_{2}(\tilde{R}_{0})>\gamma$ for any $\tilde{R}_{0}\geq R_{0},$ then since the function $G_{2}(\tilde{R}_{0})$ is nonincreasing, by using Corollary \ref{smp} and Remark \ref{boundeddomain}, we have $$G_{2}({R}_{3})=\sup _{\{x\in \overline{\mathcal{C}}:|x|= R_{3}\}} \frac{u(x)}{\Gamma_{p}(x)}\geq G_{2}({R}_{4})=\sup _{\{x\in \overline{\mathcal{C}}:|x|= R_{4}\}} \frac{u(x)}{\Gamma_{p}(x)}$$ for any $R_{0}\leq R_{3}\leq R_{4}$. 
Therefore, by (\ref{3.4est1}), we can obtain the claim above.

Otherwise, there exists a constant $\overline{R}_{0}>R_{0}$ such that $G_{2}(\overline{R}_{0})= \gamma$, then by applying the same discussion as in case (ii), we can prove the claim above.

Consequently, from (\ref{lim}), the compactness of $\{x\in \overline{\mathcal{C}}:|x|=R \}$ and the continuity of $u$, we can find points $x_{r} \in \overline{\mathcal{C}}$ whenever $r \geq  R_{0}$, such that $\left|x_{r}\right|=r$ and
$$
\lim _{r \rightarrow +\infty} \frac{u\left(x_{r}\right)}{\Gamma_{p}\left(x_{r}\right)}=\gamma.
$$

Furthermore, we consider a family of functions $\left\{u_{r}\right\}_{r \geq 2 R_{0}}$ with
$$
u_{r}(\xi):=u(r \xi) r^{\frac{n-p}{p-1}} \text { for }\xi\in \overline{\mathcal{C}}\text{ with }|\xi| > \frac{1}{2}.
$$
From the estimates (\ref{C1asy}), we can apply the Ascoli-Arzel\`{a} theorem to conclude that 
there exists a function $U=U(\xi)$ defined for $\xi\in \overline{\mathcal{C}}\text{ with }|\xi| > \frac{1}{2}$ and a sequence $r_{k} \rightarrow+\infty$ as $k \rightarrow+\infty$ such that
$$u_{r_{k}}\rightarrow U \text{ in the norm }C^{1}$$ on the compact subsets of $\{\xi\in \overline{\mathcal{C}}: |\xi| > \frac{1}{2}\}$.
And the function $U$ is $p$-harmonic on $\xi\in \overline{\mathcal{C}}\text{ with }|\xi| > \frac{1}{2}$ 
in the distribution sense. From (\ref{lim}) and
$$
\frac{u_{r}(\xi)}{\Gamma_{p}(\xi)}=\frac{u(r \xi)}{\Gamma_{p}(r \xi)},
$$
we know
$$
\frac{U(\xi)}{\Gamma_{p}(\xi)} \leq \gamma \text { for }\xi\in \overline{\mathcal{C}}\text{ with }|\xi| > \frac{1}{2}.
$$

We take $\xi_{k}=\frac{1}{r_{k}} x_{r_{k}}$ such that $\{\xi_{k}\}\subset \overline{\mathcal{C}}$. Note that $|\xi_{k}|=1$ in a compact set, and as a result, the subsequence $\left\{\xi_{k}\right\}$ converges to a point $\xi_{0}\in \overline{\mathcal{C}}$ with $\left|\xi_{0}\right|=1$ as $k \rightarrow+\infty$. Since the subsequence $u_{r_{k}}$ converges $U$ uniformly on the compact subsets, we know 
$$
\frac{U\left(\xi_{0}\right)}{\Gamma_{p}\left(\xi_{0}\right)}=\lim _{k \rightarrow+\infty} \frac{u_{r_{k}}\left(\xi_{r_{k}}\right)}{\Gamma_{p}\left(\xi_{r_{k}}\right)}=\lim_{k \rightarrow+\infty} \frac{u(x_{r_{k}})}{\Gamma_{p}\left(x_{r_{k}}\right)}=\gamma.
$$
According to Corollary \ref{smp} and Remark \ref{boundeddomain},  
$$
\frac{U(\xi)}{\Gamma_{p}(\xi)}\equiv\gamma \text { for }\xi\in \overline{\mathcal{C}}\text{ with }|\xi| > \frac{1}{2}.
$$It follows that 
$$
\lim _{r \rightarrow+\infty} u_{r}(\xi)=U(\xi)=\gamma \Gamma_{p}(\xi)$$ $\text {uniformly on the compact subsets of }\{\xi\in \overline{\mathcal{C}}: |\xi| > \frac{1}{2}\}.$

Therefore, 
$$
\lim _{r \rightarrow+\infty} \frac{u(r \xi)}{\Gamma_{p}(r \xi)}=\gamma
$$
uniformly on the $\partial B_{1}\cap \overline{\mathcal{C}}$, which implies (\ref{asy1}).

Meanwhile, since the family $u_{r}$ converges to $U$ in the norm $C^{1}$ on the compact subsets of $\{\xi\in \overline{\mathcal{C}}: |\xi| > \frac{1}{2}\}$,
we have
$$
\lim\limits_{r \rightarrow+\infty} \nabla u (r \xi) r^{\frac{n-1}{p-1}}=\lim\limits_{r \rightarrow+\infty} \nabla u_{r}(\xi)=\nabla U(\xi)=\gamma \frac{p-n}{p-1}|\xi|^{-\frac{n-1}{p-1}} \frac{\xi}{|\xi|}
$$
$\text {uniformly on the compact subsets of }\{\xi\in \overline{\mathcal{C}}: |\xi| > \frac{1}{2}\}.$
Hence,
$$
\lim _{|x| \rightarrow+\infty} (\nabla u(x)-\gamma \nabla \Gamma_{p}(x))|x|^{\frac{n-1}{p-1}}=0
,$$
which implies (\ref{asy2}).
\end{proof}

\section{The proof of Theorem \ref{thm:1.2}}\label{sec4}

To prove Theorem \ref{thm:1.2},  it is necessary to establish some integral identities.

First, we express the ${\rm Cap}_{ p}(\Omega\cap\mathcal{C};\mathcal{C})$ using the integral over $\Sigma$.

\begin{lemma}\label{lem:4.1}
Let $\Omega$ and $\overline{\Sigma}$ be as stated in Theorem \ref{thm:1.2}. Moreover, we assume that $\overline{\Sigma}$ is perpendicular to $\partial \overline{\mathcal{C}}$ along $\partial{\overline{\Sigma}}.$
Then, the solution $u$ to \eqref{eqn:1.1} satisfies
\begin{equation}\label{eqn:cap}
p{\rm Cap}_{ p}(\Omega\cap\mathcal{C};\mathcal{C})=
\int_{\Sigma}|\nabla u|^{p-1}~d \mathcal{H}^{n-1}.
\end{equation}
\end{lemma}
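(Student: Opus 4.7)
The plan is to reduce the identity \eqref{eqn:cap} to a divergence computation combined with the asymptotic decay from Theorem \ref{Asymptotic expansion}. Using \eqref{eq:cap} from Lemma \ref{existence}, it suffices to prove
\[
\int_{\Omega^{\mathcal{C}}}|\nabla u|^{p}~dx=\int_{\Sigma}|\nabla u|^{p-1}~d\mathcal{H}^{n-1}.
\]
Since $\Delta_{p}u=\divg(a(\nabla u))=0$ in $\Omega^{\mathcal{C}}$, the vector field $u\,a(\nabla u)=u\,|\nabla u|^{p-2}\nabla u$ satisfies
\[
\divg\bigl(u\,a(\nabla u)\bigr)=\langle a(\nabla u),\nabla u\rangle+u\,\Delta_{p}u=|\nabla u|^{p}
\]
in the distributional sense. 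By Theorem \ref{regularity} (orthogonal intersection gives $u\in C^{1}(\overline{\mathcal{C}}\setminus\Omega)$), this field is continuous up to the boundary, so the generalized divergence theorem (Lemma \ref{div}) applies on the Lipschitz domain $\Omega^{\mathcal{C}}\cap B_{R}$.

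The boundary of $\Omega^{\mathcal{C}}\cap B_{R}$ splits into three pieces: $\Sigma$, $\Gamma\cap B_{R}$, and the spherical cap $\partial B_{R}\cap(\mathcal{C}\setminus\overline{\Omega})$. On $\Gamma$ the flux vanishes because $\langle\nabla u,\nu_{\Gamma}\rangle=0$. On $\Sigma$, the outward normal of $\Omega^{\mathcal{C}}$ is $-\nu_{\partial\Omega}$, and since $\Sigma=\{u=1\}$ with $u<1$ in $\Omega^{\mathcal{C}}$, the gradient is antiparallel to $\nu_{\partial\Omega}$, giving $\nabla u\cdot(-\nu_{\partial\Omega})=|\nabla u|$. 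Combined with $u\equiv 1$ on $\Sigma$, the flux contribution is exactly $\int_{\Sigma}|\nabla u|^{p-1}~d\mathcal{H}^{n-1}$.

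It remains to show the flux across $\partial B_{R}\cap(\mathcal{C}\setminus\overline{\Omega})$ tends to $0$ as $R\to+\infty$. This is where Theorem \ref{Asymptotic expansion} enters: $u\sim\gamma|x|^{(p-n)/(p-1)}$ and $|\nabla u|\sim\gamma\frac{n-p}{p-1}|x|^{-(n-1)/(p-1)}$, so the integrand $u\,|\nabla u|^{p-2}\nabla u\cdot\frac{x}{|x|}$ decays like $|x|^{(p-n)/(p-1)-(n-1)}$, and after integrating against the spherical area element (growth $R^{n-1}$) the total flux is of order $R^{(p-n)/(p-1)}\to 0$ since $p<n$. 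The main obstacle is book-keeping: verifying the $C^{1}$ regularity is indeed enough at the non-smooth corner $\partial\overline{\Sigma}$ (handled by Theorem \ref{regularity}) and ensuring the asymptotic estimate on $\nabla u$ is uniform so that the flux on $\partial B_{R}$ genuinely vanishes. Taking $R\to+\infty$ yields the claimed identity, and multiplication by $p$ together with \eqref{eq:cap} completes the proof.
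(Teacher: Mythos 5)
Your argument is correct, but it follows a genuinely different route from the paper. You integrate the vector field $u\,a(\nabla u)=u\,|\nabla u|^{p-2}\nabla u$, whose distributional divergence is $|\nabla u|^{p}$, over $\Omega^{\mathcal{C}}\cap B_{R}$ and kill the flux through $\partial B_{R}\cap\mathcal{C}$ using the decay rates $u=O(|x|^{(p-n)/(p-1)})$ and $|\nabla u|=O(|x|^{(1-n)/(p-1)})$; in fact the crude bounds \eqref{3.4est1} and \eqref{C1asy} already suffice here, so you do not even need the sharp expansion of Theorem \ref{Asymptotic expansion}. The paper instead integrates the divergence-free field $a(\nabla u)$ between $\Sigma$ and the level sets $\partial D_{t}=\partial\{u>t\}\cap\mathcal{C}$ to show that $\int_{\partial D_{t}\cap\mathcal{C}}|\nabla u|^{p-1}\,d\mathcal{H}^{n-1}$ is independent of $t$, and then recovers $\int_{\Omega^{\mathcal{C}}}|\nabla u|^{p}\,dx$ via the coarea formula; this forces it to first locate the critical set of $u$ (compactness via the asymptotics, so that $\mathrm{Crit}(u)\subset\{u>t\}$ for small $t$) and to check that each $D_{t}$ is Lipschitz using the Neumann condition. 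Your approach bypasses the level-set and critical-point bookkeeping entirely at the cost of an explicit flux-at-infinity estimate, and both rely on Theorem \ref{regularity} and the orthogonality assumption in the same way, namely to make $u\,a(\nabla u)$ (respectively $a(\nabla u)$) continuous up to the corner $\partial\overline{\Sigma}$ so that Lemma \ref{div} applies on the Lipschitz domain. The sign computation on $\Sigma$ ($u\equiv 1$ there forces $\nabla u$ to be normal and, by $u\le 1$ in $\Omega^{\mathcal{C}}$, to point into $\Omega$) is also handled correctly.
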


\begin{proof}
Let $${\rm Crit}(u):=\{x\in\overline{{\mathcal{C}}}\backslash{\Omega}:\nabla u=0\}.$$ By Theorem \ref{Asymptotic expansion} and $u\in C^{1}(\overline{\mathcal{C}}\backslash\Omega)$, we know that the set Crit$(u)$ is a compact set in $\overline{{\mathcal{C}}}\backslash{\Omega}$.
Due to the compactness of Crit$(u)$
in $\overline{{\mathcal{C}}}\backslash{\Omega}$, we have Crit$(u)\subset \{x\in \overline{{\mathcal{C}}}\backslash{\Omega}:u> t\}$ for $t > 0$ small enough.
Then we can denote by $\nu_{t}$ the exterior unit normal vector to $D_{t}=\{x\in \overline{{\mathcal{C}}}\backslash{\Omega}: u(x)>t\}$ so that $\nu_{t}=-\dfrac{\nabla u}{|\nabla u|}$ on $\partial D_{t}\cap \mathcal{C}$. 

We claim that the integral
 $$\int_{\partial D_{t}\cap \mathcal{C}} -|\nabla u|^{p-1}~ d \mathcal{H}^{n-1}$$
is independent of $0<t \leq 1$.

In fact, for sufficiently small $t>0$, due to the condition $\langle\nabla u, \nu_{\Gamma}\rangle=0$ on $\Gamma$, we know $\langle\nu_{t}, \nu_{\Gamma}\rangle=0$ on $\partial \{u=t\}$, namely $\partial D_{t}\cap\mathcal{C}$ is perpendicular to $\partial \overline{\mathcal{C}}$. So the domain $D_{t}$ is a Lipschitz domain. Then, by the equation $\Delta_{p} u=0$ in ${\Omega}^{\mathcal{C}}$ and $|\nabla u|^{p-2}\nabla u\in C^{0}(\overline{\mathcal{C}}\backslash\Omega)$, we can apply Lemma \ref{div} to obtain
$$0=
\int_{\partial D_{t}\cap \mathcal{C}} |\nabla u|^{p-2}\left\langle \nabla u , \nu_{t}\right\rangle ~d \mathcal{H}^{n-1} +\int_{\Sigma} |\nabla u|^{p-1}~ d \mathcal{H}^{n-1} $$
$$+\int_{\partial{D_{t}}\cap\Gamma }|\nabla u|^{p-1}\langle\nabla u, \nu_{\Gamma}\rangle ~d \mathcal{H}^{n-1}
$$
for any $0<t<1$. Moreover, since $\langle\nabla u, \nu_{\Gamma}\rangle=0$, we obtain
$$
\int_{\partial D_{t}\cap \mathcal{C}} -|\nabla u|^{p-1} ~d \mathcal{H}^{n-1}=\int_{\Sigma} -|\nabla u|^{p-1} ~d \mathcal{H}^{n-1}
$$
for any $0<t<1$.

Then, by using the identity (\ref{eq:cap}) and the coarea formula we have
\begin{equation*}
\begin{aligned}
p \operatorname{{\rm Cap}}_{p}(\Omega\cap\mathcal{C};\mathcal{C})&=\int_{{\Omega}^{\mathcal{C}}} |\nabla u|^{p}~ d x=\int_{0}^{1} \int_{\partial D_{t}\cap \mathcal{C}}  |\nabla u|^{p-1} ~d \mathcal{H}^{n-1} d t \\ &=\int_{\Sigma}|\nabla u|^{p-1} ~d \mathcal{H}^{n-1}.
\end{aligned}
\end{equation*}
This completes the proof of Lemma \ref{lem:4.1}.
\end{proof}

\begin{remark}
    In contrast to the paper \cite{BC}, we need to consider the critical points. In fact, Appendix A in the paper \cite{BC} discusses the case of a convex domain in $\mathbb{R}^{n}$  for the corresponding result above, and it is proven in Appendix B that there are no critical points in that case.
\end{remark}

Next, we calculate the value of $\gamma$. This will help us compare the value of the $P$ function at infinity with the value at the boundary $\Sigma$.
\begin{corollary}\label{gamma}
    The constant $\gamma$ in Theorem \ref{Asymptotic expansion} is given by 
    $$(\frac{p}{n\omega^{\mathcal{C}}_{n}})^{\frac{1}{p-1}}\frac{p-1}{n-p}{\rm Cap}_{p}(\Omega\cap\mathcal{C};\mathcal{C})^{\frac{1}{p-1}},$$
    where $\omega^{\mathcal{C}}_{n}:=\mathcal{H}^{n}(\mathcal{C}\cap B_{1})$ is the volume of intersection between the unit ball and the convex cone $\mathcal{C}$.
\end{corollary}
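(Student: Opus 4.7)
The plan is to use the divergence theorem on large truncations of $\Omega^{\mathcal{C}}$ and match the two sides via the asymptotic behavior established in Theorem \ref{Asymptotic expansion}. Concretely, I would fix $R$ large enough that $\overline{\Omega}\subset B_R$ and apply Lemma \ref{div} to the continuous vector field $|\nabla u|^{p-2}\nabla u$ on the Lipschitz domain $E_R:=(B_R\cap\mathcal{C})\setminus\overline{\Omega}$. Since $\Delta_p u=0$ on $E_R$, the divergence theorem yields
\begin{equation*}
0 \;=\; \int_{\Sigma}|\nabla u|^{p-1}\,d\mathcal{H}^{n-1} \;+\; \int_{\partial B_R\cap\mathcal{C}}|\nabla u|^{p-2}\bigl\langle\nabla u,\tfrac{x}{|x|}\bigr\rangle\,d\mathcal{H}^{n-1} \;+\; \int_{\Gamma\cap B_R}|\nabla u|^{p-2}\langle\nabla u,\nu_{\Gamma}\rangle\,d\mathcal{H}^{n-1},
\end{equation*}
where the $\Sigma$ term carries a $+$ sign because $\nabla u$ points into $\Omega$ on $\Sigma$ while the exterior unit normal of $E_R$ on $\Sigma$ also points into $\Omega$. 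The $\Gamma$ contribution vanishes by the Neumann condition in \eqref{eqn:1.1}, and Lemma \ref{lem:4.1} identifies the $\Sigma$ contribution with $p\,{\rm Cap}_{p}(\Omega\cap\mathcal{C};\mathcal{C})$.

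Next I would evaluate the spherical integral via Theorem \ref{Asymptotic expansion}. Since $\nabla\Gamma_{p}(x)=\tfrac{p-n}{p-1}|x|^{-\frac{n-1}{p-1}}\tfrac{x}{|x|}$, the expansion \eqref{asy2} gives
\begin{equation*}
|\nabla u|^{p-2}\bigl\langle\nabla u,\tfrac{x}{|x|}\bigr\rangle \;=\; -\gamma^{p-1}\Bigl(\tfrac{n-p}{p-1}\Bigr)^{p-1}|x|^{-(n-1)} \;+\; o\bigl(|x|^{-(n-1)}\bigr)
\end{equation*}
uniformly on $\partial B_R\cap\overline{\mathcal{C}}$ as $R\to+\infty$. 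Because $\mathcal{H}^{n-1}(\partial B_R\cap\mathcal{C})=n\omega_{n}^{\mathcal{C}}R^{n-1}$, the integrand times area is bounded and converges, producing the limit $-\gamma^{p-1}\bigl(\tfrac{n-p}{p-1}\bigr)^{p-1} n\omega_n^{\mathcal{C}}$.

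Combining the two identifications yields
\begin{equation*}
p\,{\rm Cap}_{p}(\Omega\cap\mathcal{C};\mathcal{C}) \;=\; \gamma^{p-1}\Bigl(\tfrac{n-p}{p-1}\Bigr)^{p-1} n\,\omega_{n}^{\mathcal{C}},
\end{equation*}
which inverts to the claimed formula for $\gamma$. The only delicate point is justifying the divergence theorem on $E_R$: the domain is Lipschitz (the orthogonal intersection hypothesis and Theorem \ref{regularity} ensure that $\Sigma$ and $\Gamma$ meet in a Lipschitz fashion along $\partial\overline{\Sigma}$), and $|\nabla u|^{p-2}\nabla u$ is continuous up to the boundary thanks to the global $C^{1}$ regularity of $u$, so Lemma \ref{div} applies directly. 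No other step is substantive; the rest is the algebraic manipulation in the last display.
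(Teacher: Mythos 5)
Your proposal is correct and follows essentially the same route as the paper: apply the divergence theorem (Lemma \ref{div}) to $|\nabla u|^{p-2}\nabla u$ on $B_R\cap\Omega^{\mathcal{C}}$, drop the $\Gamma$ term by the Neumann condition, identify the $\Sigma$ integral with $p\,{\rm Cap}_p$ via Lemma \ref{lem:4.1}, and evaluate the spherical term using the expansion \eqref{asy2} together with $\mathcal{H}^{n-1}(\partial B_R\cap\mathcal{C})=n\omega_n^{\mathcal{C}}R^{n-1}$. The sign bookkeeping and the resulting identity $p\,{\rm Cap}_{p}(\Omega\cap\mathcal{C};\mathcal{C})=n\omega^{\mathcal{C}}_{n}\gamma^{p-1}\bigl(\tfrac{n-p}{p-1}\bigr)^{p-1}$ match the paper exactly.
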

 
\begin{proof}
    
By $\langle\nabla u,\nu_{\Gamma}\rangle=0$ on $\Gamma$ and applying the divergence theorem \ref{div} to the equation $\Delta_{p}u=0$ on the bounded open set $B_{R}\backslash \Omega^{\mathcal{C}}$ for large $R$, we have $$0=\int_{\Sigma} |\nabla u|^{p-2}\langle\nabla u, -\nu_{\Sigma}\rangle ~d \mathcal{H}^{n-1}+\int_{\Gamma_{R}\cup
(B_{R}\cap\Gamma)}|\nabla u|^{p-2}\langle\nabla u, \nu\rangle ~d \mathcal{H}^{n-1}$$
$$=\int_{\Sigma} |\nabla u|^{p-2}\langle\nabla u, -\nu_{\Sigma}\rangle ~d \mathcal{H}^{n-1}+\int_{\Gamma_{R}}|\nabla u|^{p-2}\langle\nabla u, \nu\rangle ~d \mathcal{H}^{n-1},$$
where the unit normal vector $\nu_{\Sigma}=-\dfrac{\nabla u}{|\nabla u|}$ points towards ${\Omega}^{\mathcal{C}}$. 

Then, from Lemma $\ref{lem:4.1}$ and the asymptotic behavior (\ref{asy2}) of $u$, we can obtain 
$$p{\rm Cap}_{p}(\Omega\cap\mathcal{C};\mathcal{C})=n\omega^{\mathcal{C}}_{n}\gamma^{p-1}(\frac{n-p}{p-1})^{p-1}.$$
\end{proof}

Then, we prove the following Rellich-Poho\v{z}aev-type identity.
\begin{lemma}\label{lem:4.2}
Let $\Omega$ and $\overline{\Sigma}$ be as stated in Theorem \ref{thm:1.2}. Moreover, we assume that $\overline{\Sigma}$ is perpendicular to $\partial \overline{\mathcal{C}}$ along $\partial{\overline{\Sigma}}.$
Then, the solution $u$ to \eqref{eqn:1.1} satisfies
\begin{equation}\label{eqn:rp}
(p-1)\int_{\Sigma} |\nabla u|^{p}\langle x , \nu_{\Sigma}\rangle~ d \mathcal{H}^{n-1}=(n-p)p {\rm Cap}_{p}(\Omega\cap\mathcal{C};\mathcal{C}),
\end{equation}
where the unit normal vector $\nu_{\Sigma}=-\dfrac{\nabla u}{|\nabla u|}$ points towards ${\Omega}^{\mathcal{C}}$.
\end{lemma}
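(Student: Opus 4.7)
The plan is to apply a Rellich--Poho\v{z}aev argument to the vector field
\begin{equation*}
V \;:=\; \frac{|\nabla u|^{p}}{p}\,x \;-\; |\nabla u|^{p-2}\,\langle x,\nabla u\rangle\,\nabla u,
\end{equation*}
which is continuous on $\overline{\mathcal{C}}\setminus\Omega$ by Theorem \ref{regularity}, the factor $|\nabla u|^{p-2}\nabla u$ being continuous across the critical set of $u$ for every $p>1$. On the open set $\{|\nabla u|>0\}$, where $u$ is smooth by standard elliptic regularity for the $p$-Laplacian, a direct computation exploiting the symmetry of the Hessian and the equation $\Delta_p u=0$ yields the pointwise identity
\begin{equation*}
\operatorname{div} V \;=\; \frac{n-p}{p}|\nabla u|^{p} \;-\; \langle x,\nabla u\rangle\,\Delta_p u \;=\; \frac{n-p}{p}|\nabla u|^{p}.
\end{equation*}
Since the critical set has zero Lebesgue measure and the right-hand side is globally in $L^{1}(\Omega^{\mathcal{C}})$ by \eqref{eq:cap}, this identity transfers to the sense of distributions on $\Omega^{\mathcal{C}}$.

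Next I would apply Lemma \ref{div} on the bounded Lipschitz domain $\Omega^{\mathcal{C}}\cap B_R$ for large $R$ (its Lipschitz regularity is guaranteed by the orthogonal intersection hypothesis), whose boundary decomposes into $\Sigma$, $\Gamma\cap B_R$, and $\partial B_R\cap\mathcal{C}$. On $\Sigma$, using $\nabla u=-|\nabla u|\,\nu_\Sigma$, the two terms of $V$ combine to give
\begin{equation*}
\langle V,-\nu_\Sigma\rangle \;=\; -\tfrac{1}{p}|\nabla u|^p\langle x,\nu_\Sigma\rangle + |\nabla u|^p\langle x,\nu_\Sigma\rangle \;=\; \tfrac{p-1}{p}|\nabla u|^p\langle x,\nu_\Sigma\rangle,
\end{equation*}
which is precisely the factor appearing in \eqref{eqn:rp}. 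On $\Gamma\cap B_R$ the integrand vanishes identically: since $\mathcal{C}$ is a cone with vertex at $O$, every $x\in\partial\overline{\mathcal{C}}\setminus\{O\}$ lies on a ray through the vertex and is hence tangent to $\partial\overline{\mathcal{C}}$, so $\langle x,\nu_\Gamma\rangle=0$, while the Neumann condition in \eqref{eqn:1.1} gives $\langle\nabla u,\nu_\Gamma\rangle=0$.

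For the contribution on $\partial B_R\cap\mathcal{C}$ I would invoke the asymptotic expansion of Theorem \ref{Asymptotic expansion}: substituting $\nabla u=\gamma\nabla\Gamma_p+o(|x|^{-(n-1)/(p-1)})$ into $\langle V,x/|x|\rangle$ shows that both surviving scalar factors scale as $|x|^{\,1-p(n-1)/(p-1)}$, so integration against the $(n-1)$-dimensional surface measure of $\partial B_R\cap\mathcal{C}$, which is of order $R^{n-1}$, produces a contribution of order $R^{(p-n)/(p-1)}$ that tends to zero as $R\to\infty$ precisely because $p<n$. Passing to the limit and using \eqref{eq:cap} on the left-hand side yields
\begin{equation*}
(n-p)\,{\rm Cap}_p(\Omega\cap\mathcal{C};\mathcal{C}) \;=\; \frac{p-1}{p}\int_\Sigma |\nabla u|^p\langle x,\nu_\Sigma\rangle\,d\mathcal{H}^{n-1},
\end{equation*}
which is \eqref{eqn:rp} after clearing denominators.

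The main obstacle I anticipate is making rigorous the distributional identity for $\operatorname{div} V$ across the critical set of $u$, where only $C^{1,\alpha}$ regularity is available. I would address this either by working first on $\{|\nabla u|>\delta\}\cap\Omega^{\mathcal{C}}\cap B_R$ and sending $\delta\to 0^{+}$ with a dominated-convergence argument on the level sets $\{|\nabla u|=\delta\}$, or more directly by noting that Lemma \ref{div} in the form stated requires only $V\in C^{0}$ with $\operatorname{div} V\in L^{1}$ as a distribution---conditions secured by the pointwise computation on the full-measure set $\{|\nabla u|>0\}$ together with the global integrability furnished by \eqref{eq:cap}.
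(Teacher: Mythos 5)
Your proposal is correct and follows essentially the same route as the paper: a Rellich--Poho\v{z}aev identity on $B_R\cap\Omega^{\mathcal{C}}$, with the $\Gamma$-term killed by $\langle x,\nu_\Gamma\rangle=\langle\nabla u,\nu_\Gamma\rangle=0$, the $\partial B_R\cap\mathcal{C}$-term killed by the asymptotics of Theorem \ref{Asymptotic expansion}, and the $\Sigma$-term giving the factor $\tfrac{p-1}{p}|\nabla u|^p\langle x,\nu_\Sigma\rangle$. The only difference is that you derive the Poho\v{z}aev identity by hand from the vector field $V$ (and correctly flag the distributional-divergence issue at the critical set), whereas the paper simply invokes Theorem 4.1 of \cite{CL}, which packages exactly that computation.
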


\begin{proof}
From the Poho\v{z}aev type identity (Theorem 4.1 in \cite{CL}) on the bounded open set $B_{R}\cap{\Omega}^{\mathcal{C}}$,
by $\langle\nabla u,\nu_{\Gamma}\rangle=0$ on $\Gamma$,  $\langle x, \nu_{\Gamma}\rangle=0$ on $\Gamma$ and direct computations, we get, for $R$ sufficiently large,
\begin{equation*}
\begin{aligned}
    &\dfrac{p-n}{p}\int_{B_{R}\cap{\Omega}^{\mathcal{C}}}|\nabla u|^{p} ~dx\\
=&\int_{(B_{R}\cap\Gamma)\cup\Sigma\cup(\partial B_{R}\cap\mathcal{C})}(|\nabla u|^{p-2}\langle\nabla u,x\rangle\langle\nabla u,\nu\rangle-\frac{1}{p}|\nabla u|^{p}\langle x,\nu\rangle ) ~d \mathcal{H}^{n-1}\\
=&\frac{p-1}{p}\int_{\Sigma}|\nabla u|^{p}\langle x,-\nu_{\Sigma}\rangle ~d \mathcal{H}^{n-1}\\
&+\int_{\partial B_{R}\cap\mathcal{C}} (|\nabla u|^{p-2}\langle\nabla u,x\rangle\langle\nabla u,\nu\rangle-\frac{1}{p}|\nabla u|^{p}\langle x,\nu\rangle)~ d \mathcal{H}^{n-1}.
\end{aligned}
\end{equation*}
Then, by taking the limit for $R\rightarrow+\infty$ and noting that the integrals on $\partial B_R\cap\mathcal{C}$ converge to zero due to the asymptotic behavior of $u$ at infinity given by Theorem \ref{Asymptotic expansion}. Thus, we obtain
the assertion.
\end{proof}

Now, using the identities above, we can calculate the value $C$ of $|\nabla u|$ on $\Sigma$ with the overdetermined condition \eqref{Overdeterminedcondition}. At the same time, an expression for ${\rm Cap}_{p}(\Omega\cap\mathcal{C};\mathcal{C})$ that is only related to the geometric quantity of the domain $\Omega\cap\mathcal{C}$ and independent of the solution $u$ can also be given.
\begin{proposition}\label{4.4}
Let $\Omega$ and $\overline{\Sigma}$ be as stated in Theorem \ref{thm:1.2}. Moreover, we assume that $\overline{\Sigma}$ is perpendicular to $\partial \overline{\mathcal{C}}$ along $\partial{\overline{\Sigma}}.$
 Let $u$ be a weak solution to \eqref{eqn:1.1} and \eqref{Overdeterminedcondition}. The constant $C$ appearing in \eqref{Overdeterminedcondition} equals

\begin{equation}\label{eqn:VC}
C=\dfrac{n-p}{n(p-1)}\dfrac{P(\Omega; \mathcal{C})}{\mathcal{H}^{n}(\Omega\cap{\mathcal{C}})}.
\end{equation}

Moreover, the following explicit expression of the  $p$-capacity of $\Omega\cap\mathcal{C}$
holds:
\begin{equation}\label{eqn:24}
{\rm Cap}_{p}(\Omega\cap\mathcal{C};\mathcal{C})=\frac1p\Big(\frac{n-p}{p-1}\Big)^{p-1}\frac{P(\Omega; \mathcal{C})^{p}}{(n\mathcal{H}^{n}(\Omega\cap{\mathcal{C}}))^{p-1}}.
\end{equation}
\end{proposition}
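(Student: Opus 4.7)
The strategy is straightforward: substitute the overdetermined condition $|\nabla u|=C$ on $\Sigma$ into the two integral identities from Lemmas \ref{lem:4.1} and \ref{lem:4.2}, then use the divergence theorem applied to the position vector field $x$ on $\Omega\cap\mathcal{C}$ to convert the boundary integral of $\langle x,\nu_\Sigma\rangle$ into a volume.

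First, I would plug $|\nabla u|=C$ on $\Sigma$ into Lemma \ref{lem:4.1}, using the fact that $P(\Omega;\mathcal{C})=\mathcal{H}^{n-1}(\Sigma)$, to get
\begin{equation*}
p\,{\rm Cap}_{p}(\Omega\cap\mathcal{C};\mathcal{C})=C^{p-1}P(\Omega;\mathcal{C}).
\end{equation*}
Similarly, inserting $|\nabla u|=C$ into the Rellich--Pohozaev identity from Lemma \ref{lem:4.2} gives
\begin{equation*}
(p-1)C^{p}\int_{\Sigma}\langle x,\nu_{\Sigma}\rangle\, d\mathcal{H}^{n-1}=(n-p)p\,{\rm Cap}_{p}(\Omega\cap\mathcal{C};\mathcal{C}).
\end{equation*}

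Next, I would evaluate the remaining boundary integral via the divergence theorem. Applying Lemma \ref{div} to the vector field $x$ on the domain $\Omega\cap\mathcal{C}$ (whose boundary decomposes as $\Sigma$ together with the portion of $\partial\overline{\mathcal{C}}$ lying in $\Omega$), and using $\operatorname{div}(x)=n$ together with the fact that $\langle x,\nu_{\partial\overline{\mathcal{C}}}\rangle=0$ on the cone's lateral boundary (since $\partial\overline{\mathcal{C}}\setminus\{O\}$ is ruled by lines through the origin), I obtain
\begin{equation*}
\int_{\Sigma}\langle x,\nu_{\Sigma}\rangle\, d\mathcal{H}^{n-1}=n\,\mathcal{H}^{n}(\Omega\cap\mathcal{C}).
\end{equation*}

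Combining these three identities solves for $C$ as a ratio of ${\rm Cap}_{p}$ expressions: dividing the Rellich--Pohozaev relation (with the volume substituted in) by the capacity relation from Lemma \ref{lem:4.1} cancels ${\rm Cap}_{p}$ and yields $(p-1)C\cdot n\mathcal{H}^{n}(\Omega\cap\mathcal{C})=(n-p)P(\Omega;\mathcal{C})$, which is precisely \eqref{eqn:VC}. Finally, inserting this explicit value of $C$ back into $p\,{\rm Cap}_{p}=C^{p-1}P(\Omega;\mathcal{C})$ gives the closed-form expression \eqref{eqn:24}. There is no real obstacle here; the only subtle point is verifying that the portion of $\partial(\Omega\cap\mathcal{C})$ lying on $\partial\overline{\mathcal{C}}$ contributes nothing to the flux of $x$, which follows from the conical structure of $\mathcal{C}$ (equivalently, from the smoothness assumption on $\partial\overline{\mathcal{C}}\setminus\{O\}$ together with $x$ being tangent to every ray through the origin).
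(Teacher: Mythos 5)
Your proposal is correct and follows essentially the same route as the paper: substitute $|\nabla u|=C$ into the identities of Lemmas \ref{lem:4.1} and \ref{lem:4.2}, use the divergence theorem for the position vector field on $\Omega\cap\mathcal{C}$ (with $\langle x,\nu\rangle=0$ on $\partial\overline{\mathcal{C}}$) to get $\int_{\Sigma}\langle x,\nu_{\Sigma}\rangle\,d\mathcal{H}^{n-1}=n\mathcal{H}^{n}(\Omega\cap\mathcal{C})$, and solve the resulting pair of equations for $C$ and ${\rm Cap}_{p}$. The paper's proof is merely terser, leaving the divergence-theorem step implicit, which you correctly spell out.
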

\begin{proof}
By using the identities \eqref{eqn:cap} and \eqref{eqn:rp}, we can separately obtain the following equations
\begin{equation*}
{\rm Cap}_{p}(\Omega\cap\mathcal{C};\mathcal{C})=\frac1pC^{p-1}P(\Omega; \mathcal{C}) \ \ {\rm and}\ \ {\rm Cap}_{p}(\Omega\cap\mathcal{C};\mathcal{C})=\frac{n(p-1)}{p(n-p)}C^p\mathcal{H}^{n}(\Omega\cap{\mathcal{C}}).
\end{equation*}
Combining these equations, we can obtain equations \eqref{eqn:VC} and \eqref{eqn:24}.
\end{proof}

Next, we apply the strong maximum principle to the $P$-function.

{\bf The $P$-function.} We introduce the $P$-function defined by
\begin{equation}\label{eqn:4.1}
  P={u^{-\frac{p(n-1)}{n-p}}}{|\nabla u|^{p}}.
\end{equation}
From the $C^{1}(\overline{\mathcal{C}}\backslash{\Omega})$ regularity of the solution $u$ in Theorem \ref{regularity} and $u>0$, we know that the $P$-function is $C^{0}(\overline{\mathcal{C}}\backslash{\Omega}).$

\begin{proposition}\label{Pfunction}
Let $\Omega$ and $\overline{\Sigma}$ be as stated in Theorem \ref{thm:1.2}. Moreover, we assume that $\overline{\Sigma}$ is perpendicular to $\partial \overline{\mathcal{C}}$ along $\partial{\overline{\Sigma}}.$
Let $u$ be a weak solution to the problem (\ref{eqn:1.1}). Then, at $\{ \nabla u\not=0\}$,
$$a_{ij,p}(\nabla u)P_{ij}+L_iP_i\geq0,$$ where $L_{i}P_{i}$ is the lower order term of $P_{i}$. 
Moreover, the function $P$ can not attain a maximum at any point of $\Gamma$ and any interior point of\ ${\Omega}^{\mathcal{C}}$, unless
$P$ is a constant.
\end{proposition}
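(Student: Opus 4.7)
The plan is to derive a degenerate linear elliptic differential inequality for $P$, then apply the strong maximum principle for the interior statement and Hopf's boundary lemma together with the convexity of $\mathcal{C}$ for the boundary statement on $\Gamma$. For the differential inequality, I would compute $P_{i}$ and $P_{ij}$ directly from $P = u^{-p(n-1)/(n-p)}|\nabla u|^{p}$, substitute the equation $a_{ij,p}(\nabla u)\,u_{ij} = \Delta_{p}u = 0$, and use the Bochner-type identity obtained by differentiating $\Delta_{p}u = 0$ once and contracting with $u_{k}$ in order to rewrite the third-order term $a_{ij,p}u_{ijk}u_{k}$. A Cauchy--Schwarz inequality for the Hessian in the inner product induced by $a_{ij,p}$ then yields $a_{ij,p}(\nabla u)P_{ij} + L_{i}P_{i} \geq 0$ on $\{\nabla u \neq 0\}$. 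This is the Payne--Philippin/Garofalo--Sartori computation (cf.\ \cite{PP,GS,P}); the exponent $-p(n-1)/(n-p)$ is chosen precisely so that the algebraic cancellations go through, and it is also the exponent that makes $P$ constant for the model (\ref{model}).

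Since $a_{ij,p}(\nabla u)$ is uniformly elliptic wherever $\nabla u \neq 0$, the classical strong maximum principle gives that any interior maximum of $P$ at a non-critical point of $u$ forces $P$ to be locally constant. An interior maximum cannot occur at a critical point of $u$: at such a point $P$ vanishes, whereas Theorem \ref{Asymptotic expansion} together with Corollary \ref{gamma} ensures that $P$ has a strictly positive limit at infinity, so $P \not\equiv 0$ unless it is this positive constant everywhere. A connectedness argument then extends local constancy to all of $\Omega^{\mathcal{C}}$.

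For the statement on $\Gamma$, I would use Hopf's boundary lemma at a candidate maximum point $x_{0} \in \Gamma$. By the same reasoning $\nabla u(x_{0}) \neq 0$, so the linearized operator is uniformly elliptic near $x_{0}$, and Hopf forces $\langle \nabla P, \nu_{\Gamma}\rangle(x_{0}) > 0$ unless $P$ is constant. To reach a contradiction, I would show $\langle \nabla P, \nu_{\Gamma}\rangle \leq 0$ on $\Gamma$. The Neumann condition $\langle \nabla u, \nu_{\Gamma}\rangle = 0$ annihilates the term coming from differentiating the $u^{-p(n-1)/(n-p)}$ factor, and reduces the remaining contribution to a positive multiple of $u_{ki}u_{k}\nu_{\Gamma}^{i}$. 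Differentiating the identity $\langle \nabla u, \nu_{\Gamma}\rangle = 0$ along the tangential direction $\nabla u$ (tangential because the Neumann condition places $\nabla u$ in the tangent space of $\Gamma$) produces $u_{ki}u_{k}\nu_{\Gamma}^{i} = -II_{\partial \overline{\mathcal{C}}}(\nabla u, \nabla u)$, where $II_{\partial\overline{\mathcal{C}}}$ denotes the second fundamental form of $\partial\overline{\mathcal{C}}$ with respect to the outward normal. Convexity of $\mathcal{C}$ gives $II_{\partial\overline{\mathcal{C}}} \geq 0$, whence $\langle \nabla P, \nu_{\Gamma}\rangle \leq 0$, which is the needed contradiction.

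The main technical obstacle is the Bernstein-type inequality of the first step: the computation is lengthy, and the precise exponent on $u$ is the point at which the quadratic error terms conspire, via Cauchy--Schwarz applied to the Hessian in the $a_{ij,p}$-inner product, to yield a non-negative remainder. A secondary, more routine issue is ensuring that both the strong maximum principle and Hopf's lemma are invoked in the non-degenerate region $\{\nabla u \neq 0\}$; this is dispatched by the observation that $P$ vanishes at critical points of $u$ while its limit at infinity, given by Corollary \ref{gamma}, is strictly positive, so candidate maxima are automatically non-critical.
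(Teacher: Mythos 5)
Your proposal is correct and follows essentially the same route as the paper: the differential inequality is the Payne--Philippin/Garofalo--Sartori computation (which the paper itself delegates to \cite[Proposition 3.8]{XY}), the interior statement uses the strong maximum principle on the non-degenerate set together with the observation that $P$ vanishes at critical points of $u$, and the boundary statement combines Hopf's lemma with the identity $\nabla^2u(\nabla u,\nu_\Gamma)=-h(\nabla_\Gamma u,\nabla_\Gamma u)\le 0$ obtained by tangentially differentiating the Neumann condition and using convexity of $\mathcal{C}$. No substantive differences from the paper's argument.
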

\begin{proof}
Let $\mathring{\rm Crit}(u):=\{x\in{\Omega}^{\mathcal{C}}:\nabla u=0\}$. The calculations are all taken in
${\Omega}^{\mathcal{C}}\backslash{{\mathring{\rm Crit}(u)}}$. Since the solution $u$ is a smooth function in ${\Omega}^{\mathcal{C}}\backslash{{\rm Crit}}(u)$, we can calculate the derivative in the classic sense. The detailed calculations to get the equation $a_{ij,p}P_{ij}+L_iP_i\geq0$ can be referred to \cite[Proposition 3.8]{XY}.

Suppose that there exists one point $x_{0}\in\Gamma$ such that $P(x_{0})=\sup\limits_{{\Omega}^{\mathcal{C}}} P>0.$ So there is no critical point in the neighborhood of $x_{0}$. Then we can use the Hopf lemma to obtain $P_{\nu_{\Gamma}}(x_{0})>0.$

On the other hand, by direct computations,
\begin{equation*}
\begin{aligned}
 \langle\nabla P,\nu_{\Gamma}\rangle(x_{0})&=
-p|\nabla u|^{p-1}u^{\frac{p(1-n)}{n-p}}\bigcdot\\
[-\nabla^{2}u(\frac{\nabla u}{|\nabla u|}&,\nu_{\Gamma})+\frac{n-1}{n-p}\dfrac{|\nabla u|}{u}\langle\nabla u,\nu_{\Gamma}\rangle].
\end{aligned}
\end{equation*}
 From $\langle\nabla u,\nu_{\Gamma}\rangle=0$ on $\Gamma$, if we denote $\nabla_{\Gamma}u$ as the tangential component of $\nabla u$, then we deduce that at $x_{0}$,
    $\nabla u=\nabla_{\Gamma} u$ and
   $$0=\nabla_{\Gamma}u(\langle\nabla u,\nu_{\Gamma}\rangle)=\nabla^{2}u(\nabla u,\nu_{\Gamma})+h(\nabla_{\Gamma} u,\nabla_{\Gamma} u).$$ 
By the convexity of cone $\Gamma$, we obtain $\nabla^{2} u(\nabla u,\nu_{\Gamma})\leq 0.$ Thus, we know $\langle\nabla P,\nu_{\Gamma}\rangle(x_{0})\leq 0.$
So the maximum value can not be achieved at the boundary $\Gamma$.

Suppose that there exists one interior point $x_{1}\in{\Omega}^{\mathcal{C}}$ such that $P(x_{1})=\sup\limits_{{\Omega}^{\mathcal{C}}} P>0.$ This means that $|\nabla u(x_{1})|\neq 0.$ So the equation is non-degenerate near the point $x_{1}.$ Then by applying the maximum principle, we know that $P$ is constant near the point $x_{1}$. Thus $P$ is constant on ${\Omega}^{\mathcal{C}}$. 
\end{proof}

\begin{proof}[\textbf{Proof of Theorem \ref{thm:1.2}}]
By applying Theorem \ref{Asymptotic expansion} and Corollary \ref{gamma}, we can  obtain $$\lim\limits_{|x|\rightarrow +\infty} P=(\frac{n\omega^{\mathcal{C}}_{n}}{p})^{\frac{p}{n-p}}(\frac{n-p}{p-1})^{\frac{p(n-1)}{n-p}}({\rm Cap}_{p}(\Omega\cap\mathcal{C};\mathcal{C})^{\frac{-p}{n-p}}.$$
Moreover, by Proposition \ref{4.4}, we have
$$\lim\limits_{|x|\rightarrow +\infty} P=(n\omega^{\mathcal{C}}_{n})^{\frac{p}{n-p}}(\frac{n-p}{p-1})^{p}(\frac{(n\mathcal{H}^{n}(\Omega\cap{\mathcal{C}}))^{p-1}}{P(\Omega; \mathcal{C})^{p}})^{\frac{p}{n-p}}$$
and
 $$P|_{\Sigma}=(\dfrac{n-p}{n(p-1)})^{p}(\dfrac{P(\Omega; \mathcal{C})}{\mathcal{H}^{n}(\Omega\cap{\mathcal{C}})})^{p}.$$ 
According to the isoperimetric inequality in the convex cone (Theorem \ref{isoperimetric}), we have \begin{equation}\label{compinfinity}
\lim\limits_{|x|\rightarrow +\infty} P\leq P|_{\overline{\Sigma}}.
\end{equation}
Here, if the equal sign holds, from the rigidity result in Theorem \ref{isoperimetric}, we have completed the proof.

From Theorem \ref{regularity} and the fact $u>0$, the $P$-function is continuous in $\overline{\mathcal{C}}\backslash{\Omega},$
 then we can apply the $u=1$ on $\Sigma$ and $|\nabla u|=C$ on $\Sigma$ to get $P|_{\overline{\Sigma}}=P|_{\Sigma}=constant$.

By the Proposition \ref{Pfunction}, we know that either $P$ is a constant in $\overline{\mathcal{C}}\backslash\Omega$ or $P$ attains its maximum on $\overline{\Sigma}.$ Since $P=constant$ on $\overline{\Sigma}$ in both cases, we have
  $$\langle\nabla P,\nu_{\Sigma}\rangle\leq 0$$
on $\Sigma$. Note that the unit normal vector $\nu_{\Sigma}=-\dfrac{\nabla u}{|\nabla u|}$ points towards ${\Omega}^{\mathcal{C}}$. 

By direct computations and the equality (\ref{mean curvature}), we have
\begin{equation*}
\begin{aligned}
0&\geq \langle\nabla P,\nu_{\Sigma}\rangle\\
&=-p|\nabla u|^{p-1}u^{\frac{p(1-n)}{n-p}}[\nabla^{2}u(\frac{\nabla u}{|\nabla u|},\frac{\nabla u}{|\nabla u|})-\frac{n-1}{n-p}\dfrac{|\nabla u|^{2}}{u}]\\
&=-\frac{p}{p-1}|\nabla u|^{p}u^{\frac{p(1-n)}{n-p}}[H_{\Sigma}-\frac{(n-1)(p-1)}{n-p}\dfrac{|\nabla u|}{u}].
\end{aligned}
\end{equation*}

Hence, since the boundary $\partial \Omega$ is  $ C^{2,\alpha}$, we have \begin{equation}\label{Mineq}
    H_{\overline{\Sigma}}\geq \dfrac{(n-1)(p-1)}{n-p}C=(n-1)\dfrac{P(\Omega; \mathcal{C})}{n\mathcal{H}^{n}(\Omega\cap{\mathcal{C}})}>0.
\end{equation}
By integrating the above inequality, we obtain $$\int_{\overline{\Sigma}}\dfrac{1}{H_{\overline{\Sigma}}}~ d \mathcal{H}^{n-1} \leq \dfrac{n}{n-1}\mathcal{H}^{n}(\Omega\cap{\mathcal{C}}).$$ Therefore, by applying the Heintze-Karcher type inequality in Lemma \ref{HK}, we conclude that $\overline{\Sigma}$ is a spherical cap and the radius $R=\dfrac{n-p}{p-1}C^{-1}$. Simultaneously, by combining the uniqueness of the solution, we can complete the proof of Theorem \ref{thm:1.2}.
\end{proof}

\quad

\begin{acknow}
The authors would like to thank Chao Qian, Chao Xia, Jinyu Guo and M. Fogagnolo for their helpful conversations on this work. In particular, special thanks are extended to Xiaoliang Li for his valuable suggestions.

 This work is partially supported by the National Natural Science Foundation of China (Grants Nos.~11831005, 12061131014 and 12201138) and Mathematics Tianyuan fund project (Grant No. 12226350). 
\end{acknow}


\begin{thebibliography}{00}

\bibitem{BCS}
C. Bianchini, G. Ciraolo, P. Salani.
\newblock{An overdetermined problem for the anisotropic capacity.}
\newblock{Calc. Var. Partial Differential Equations.}, 55(84) (2016) 24pp.

\bibitem{BC}
C. Bianchini, G. Ciraolo.
\newblock{Wulff shape characterizations in overdetermined anisotropic elliptic problems.}
\newblock{Comm. Partial Differential Equations.}, 43(5) (2018) 790--820.

\bibitem{CRS}
X. Cabré, X. Ros-Oton, J. Serra.
\newblock{Sharp isoperimetric inequalities via the ABP method.}
\newblock{J. Eur. Math. Soc.}, 18 (2016) 2971--2998. 

\bibitem{CSZ}
A.~Colesanti, K. Nyström, P. Salani, J. Xiao, D. Yang,
G. Zhang.
\newblock{The Hadamard variational formula and the Minkowski problem for $p$-capacity.}
\newblock{Adv. Math.},  285 (2015) 1511--1588.



\bibitem{CS}
A. Cianchi, P. Salani.
\newblock{Overdetermined anisotropic elliptic problems.}
\newblock{Math. Ann.}, 345 (4) (2009) 859--881.

\bibitem{CFR}
G. Ciraolo, A. Figalli, A. Roncoroni.
\newblock{Symmetry results for critical anisotropic $p$-Laplacian equations in convex cones.}
\newblock{Geom. Funct. Anal.}, 30 (2020) 770--803.

\bibitem{CL}
G. Ciraolo, X. Li. 
\newblock{An exterior overdetermined problem for Finsler $N$-Laplacian in convex cones.}
\newblock{Calc. Var. Partial Differential Equations.}, 61(121) (2022) 27pp.

\bibitem{CR}
G. Ciraolo, A. Roncoroni.
\newblock{Serrin's type overdetermined problems in convex cones.}
\newblock{Calc. Var. Partial Differential Equations.}, 59(28) (2020) 21pp. 

\bibitem{DiB}
E. DiBenedetto.
\newblock{$C^{1+\alpha}$ local regularity of weak solutions of degenerate elliptic equations.}
\newblock{Nonlinear Anal.}, 7 (1983) 827--850. 

\bibitem{DPV}
S. Dipierro, G. Poggesi, E. Valdinoci.
\newblock{Radial symmetry of solutions to anisotropic and weighted diffusion equations with discontinuous nonlinearities.}
\newblock{Calc. Var. Partial Differential Equations.}, 61 (72) (2022) 31pp. 

\bibitem{FK}
A. Farina, B. Kawohl.
\newblock{Remarks on an overdetermined boundary value problem.}
\newblock{Calc. Var. Partial Differential Equations.}, 31 (2008) 351--357.

\bibitem{FI}
A. Figalli, E. Indrei.
\newblock{A sharp stability result for the relative isoperimetric inequality inside convex cones.}
\newblock{J. Geom. Anal.}, 23 (2013) 938--969. 

\bibitem{FM}
M. Fogagnolo, L. Mazzieri.
\newblock{Minimising hulls, $p$-capacity and isoperimetric inequality on complete Riemannian manifolds.}
\newblock{J. Funct. Anal.}, 283(9) (2022) 49pp.

\bibitem{FGK}
I. Fragal\ a, F. Gazzola, B. Kawohl.
\newblock{Overdetermined problems with possibly degenerate ellipticity, a geometric approach.}
\newblock{Math. Z.}, 254 (2006) 117--132.


\bibitem{GL}
N. Garofalo, J. L. Lewis.
\newblock{A symmetry result related to some overdetermined boundary value problems.}
\newblock{Amer. J. Math.}, 111(1) (1989) 9--33.

\bibitem{GS}
N. Garofalo, E. Sartori. 
\newblock{Symmetry in exterior boundary value problems for quasilinear elliptic equations via blow-up and a-priori estimates.}
\newblock{Adv Differ Equ.}, 4 (1999) 137--161.

\bibitem{HKM}
J. Heinonen, T. Kilpel$\ddot{a}$inen, O. Martio.
\newblock{Nonlinear potential theory of degenerate elliptic equations.}
\newblock{Dover Publications, Inc., Mineola, NY, 2006.}

\bibitem{H}
I. Holopainen.
\newblock{Nonlinear potential theory and quasiregular mappings on Riemannian manifolds.}
\newblock{Ann. Acad. Sci. Fenn. Ser. A I Math. Dissertationes.}, 74 (1990) 45 pp.


\bibitem{IPT}
A. Iacopetti, F. Pacella, T. Weth.
\newblock{Existence of nonradial domains for overdetermined and isoperimetric problems in nonconvex cones.}
\newblock{Arch. Ration. Mech. Anal.}, 245(2) (2022) 1005--1058.


\bibitem{Lie}
G. M. Lieberman.
\newblock{Boundary regularity for solutions of degenerate elliptic equations.}
\newblock{Nonlinear Anal.}, 12(11) (1988) 1203--1219.

\bibitem{LP}
P. Lions, F. Pacella.
\newblock{Isoperimetric inequality for convex cones.}
\newblock{Proc. Amer. Math. Soc.}, 109 (1990) 477--485. 



\bibitem{KV}
S. Kichenassamy, L. V\'{e}ron.
\newblock{Singular solutions of the $p$-Laplace equation.}
\newblock{Math. Ann.}, 275 (1986) 599--615.

\bibitem{PT}
F. Pacella, G. Tralli.
\newblock{Overdetermined problems and constant mean curvature surfaces in cones.}
\newblock{Rev. Mat. Iberoam.}, 36(3) (2020) 841--867.

\bibitem{PT2}
F. Pacella, G. Tralli.
\newblock{Isoperimetric cones and minimal solutions of partial overdetermined problems.}
\newblock{Publ. Mat.}, 65(1) (2021) 61--81.


\bibitem{P}
G. Poggesi.
\newblock{Radial symmetry for $p$-harmonic functions in exterior and punctured domains.}
\newblock{Appl. Anal.}, 98(10) (2019) 1785--1798.

\bibitem{PP} 
L.E. Payne, G.A. Philippin.
\newblock{On some maximum principles involving harmonic functions and their derivatives.}
\newblock{SIAM J. Math. Anal.},  10(1) (1979) 96--104.

\bibitem{R2}
W. Reichel. 
\newblock{Radial symmetry for elliptic boundary-value problems on exterior domains.} 
\newblock{Arch. Rational Mech. Anal.}, 137(3) (1997) 381--394.

\bibitem{R1}
W. Reichel. 
\newblock{Radial symmetry for an electrostatic, a capillarity, and some fully nonlinear overdetermined problems
on exterior domains.}
\newblock{Z Anal Anwend.}, 15 (1996) 619--635.


\bibitem{Tolk}
P. Tolksdorf.
\newblock{On The Dirichlet problem for Quasilinear Equations.},
\newblock{Comm. Partial Differential Equations.}, 8(7) (1983) 773--817.


\bibitem{Se}
 J. Serrin. 
 \newblock{A symmetry problem in potential theory.}
 \newblock{Arch. Ration. Mech. Anal.}, 43 (1971) 304--318.


\bibitem{Wang-Xia}
G. Wang, C. Xia.
\newblock{A characterization of the Wulff shape by an overdetermined anisotropic PDE.}
\newblock{Arch. Ration. Mech. Anal.}, 199(1) (2011) 99--115.

\bibitem{Wei}
H. F. Weinberger. 
\newblock{Remark on the preceding paper of Serrin.}
\newblock{Arch. Ration. Mech. Anal.}, 43 (1971) 319--320.


\bibitem{XY}
C.~Xia, J.~Yin.
\newblock{Two overdetermined problems for anisotropic p-Laplacian.}
\newblock{Mathematics in Engineering.}, 4 (2022) 1--18.

\bibitem{ZW}
W.~Ziemer,
\newblock{Weakly Differentiable Functions.} 
\newblock{Springer, New York, 1989.} 


\end{thebibliography}
\end{document}